\numberwithin{equation}{section}
\newtheorem{theorem}{Theorem}[section]
\newtheorem{corollary}[theorem]{Corollary}
\newtheorem{lemma}[theorem]{Lemma}
\theoremstyle{definition}
\theoremstyle{remark}
\newcommand{\Rnum}[1]{\uppercase\expandafter{\romannumeral #1\relax}}
\newcommand{\mr}[1]{\mathrm{#1}}
\newcommand{\mb}[1]{\mathbb{#1}}
\newcommand{\mc}[1]{\mathcal{#1}}
\newcommand\numberthis{\addtocounter{equation}{1}\tag{\theequation}}
\DeclareMathOperator{\Av}{Av} 
\def\clap#1{\hbox to 0pt{\hss#1\hss}}
\title{On a sharp upper bound related to the Bellman function of three integral variables of the dyadic maximal operator}
\author{Eleftherios N. Nikolidakis}
\date{\today}
\begin{document}
\maketitle

\begin{abstract}
We study properties for the sharp upper bound for integral quantities related to the Bellman function of three integral variables of the dyadic maximal operator, that is determined in \cite{Nikol}.
\end{abstract}

\section{Introduction} \label{sec:0}
The dyadic maximal operator on $\mb R^n$ is a useful tool in analysis and is defined by
\begin{equation} \label{eq:0p1}
	\mc M_d\varphi(x) = \sup\left\{ \frac{1}{|S|} \int_S |\varphi(u)|\,\mr du: x\in S,\ S\subseteq \mb R^n\ \text{is a dyadic cube} \right\},
\end{equation}
for every $\varphi\in L^1_\text{loc}(\mb R^n)$, where $|\cdot|$ denotes the Lebesgue measure on $\mb R^n$, and the dyadic cubes are those formed by the grids $2^{-N}\mb Z^n$, for $N=0, 1, 2, \ldots$.\\
It is well known that it satisfies the following weak type (1,1) inequality
\begin{equation} \label{eq:0p2}
	\left|\left\{ x\in\mb R^n: \mc M_d\varphi(x) > \lambda \right\}\right| \leq \frac{1}{\lambda} \int_{\left\{\mc M_d\varphi > \lambda\right\}} |\varphi(u)|\,\mr du,
\end{equation}
for every $\varphi\in L^1(\mb R^n)$, and every $\lambda>0$,
from which it is easy to get the following  $L^p$-inequality
\begin{equation} \label{eq:0p3}
	\|\mc M_d\varphi\|_p \leq \frac{p}{p-1} \|\varphi\|_p,
\end{equation}
for every $p>1$, and every $\varphi\in L^p(\mb R^n)$.
It is easy to see that the weak type inequality \eqref{eq:0p2} is the best possible. For refinements of this inequality one can consult \cite{6}.

It has also been proved that \eqref{eq:0p3} is best possible (see \cite{1} and \cite{2} for general martingales and \cite{18} for dyadic ones).
An approach for studying the behavior of this maximal operator in more depth is the introduction of the so-called Bellman functions which play the role of generalized norms of $\mc M_d$. Such functions related to the $L^p$-inequality \eqref{eq:0p3} have been precisely identified in \cite{4}, \cite{5} and \cite{11}. For the study of the Bellman functions of $\mc M_d$, we use the notation $\Av_E(\psi)=\frac{1}{|E|} \int_E \psi$, whenever $E$ is a Lebesgue measurable subset of $\mb R^n$ of positive measure and $\psi$ is a real valued measurable function defined on $E$. We fix a dyadic cube  $Q$ and define the localized maximal operator $\mc M'_d\varphi$ as in \eqref{eq:0p1} but with the dyadic cubes $S$ being assumed to be contained in $Q$. Then for every $p>1$ we let
\begin{equation} \label{eq:0p4}
	B_p(f,F)=\sup\left\{ \frac{1}{|Q|} \int_Q (\mc M'_d\varphi)^p: \Av_Q(\varphi)=f,\ \Av_Q(\varphi^p)=F \right\},
\end{equation}
where $\varphi$ is nonnegative in $L^p(Q)$ and the variables $f, F$ satisfy $0<f^p\leq F$. By a scaling argument it is easy to see that \eqref{eq:0p4} is independent of the choice of $Q$ (so we may choose
$Q$ to be the unit cube $[0,1]^n$).
In \cite{5}, the function \eqref{eq:0p4} has been precisely identified for the first time. The proof has been given in a much more general setting of tree-like structures on probability spaces.

More precisely we consider a non-atomic probability space $(X,\mu)$ and let $\mc T$ be a family of measurable subsets of $X$, that has a tree-like structure similar to the one in the dyadic case (the exact definition can be found in  []).
Then we define the dyadic maximal operator associated to $\mc T$, by
\begin{equation} \label{eq:0p5}
	\mc M_{\mc T}\varphi(x) = \sup \left\{ \frac{1}{\mu(I)} \int_I |\varphi|\,\mr \; d\mu: x\in I\in \mc T \right\},
\end{equation}
for every $\varphi\in L^1(X,\mu)$, $x\in X$.

This operator is related to the theory of martingales and satisfies essentially the same inequalities as $\mc M_d$ does. Now we define the corresponding Bellman function of four variables of $\mc M_{\mc T}$, by
\begin{multline} \label{eq:0p6}
	B_p^{\mc T}(f,F,L,k) = \sup \left\{ \int_K \left[ \max(\mc M_{\mc T}\varphi, L)\right]^p\mr \; d\mu: \varphi\geq 0, \int_X\varphi\,\mr \; d\mu=f, \right. \\  \left. \int_X\varphi^p\,\mr \; d\mu = F,\ K\subseteq X\ \text{measurable with}\ \mu(K)=k\right\},
\end{multline}
the variables $f, F, L, k$ satisfying $0<f^p\leq F $, $L\geq f$, $k\in (0,1]$.
The exact evaluation of \eqref{eq:0p6} is given in \cite{5}, for the cases where $k=1$ or $L=f$. In the first case the author (in \cite{5}) precisely identifies the function $B_p^{\mc T}(f,F,L,1)$ by evaluating it in a first stage for the case where $L=f$. That is he precisely identifies $B_p^{\mc T}(f,F,f,1)$ (in fact $B_p^{\mc T}(f,F,f,1)=F \omega_p (\frac{f^p}{F})^p$, where                         $\omega_p: [0,1] \to [1,\frac{p}{p-1}]$ is the inverse function $H^{-1}_p$, of $H_p(z) = -(p-1)z^p + pz^{p-1}$). 

The proof of the above mentioned evaluation relies on a one-parameter integral inequality which is proved by arguments based on a linearization of the dyadic maximal operator. More precisely the author in \cite{5} proves that the inequality

\begin{equation}\label{eq:0p7}
	F\geq \frac{1}{(\beta+1)^{p-1}} f^p + \frac{(p-1)\beta}{(\beta+1)^p} \int_X (M_{\mathcal{T}}\varphi)^p \; d\mu,
\end{equation}
is true for every non-negative value of the parameter $\beta$ and sharp for one that depends on $f$, $F$ and $p$, namely for $\beta=\omega_p (\frac{f^p}{F})-1$. This gives as a consequence an upper bound for $B_p^{\mc T}(f,F,f,1)$, which after several technical considerations is proved to be best possible.Then  by using several calculus arguments the author in \cite{5} provides the evaluation of $B_p^{\mc T}(f,F,L,1)$ for every $L\geq f$. 

Now in \cite{11} the authors give a direct proof of the evaluation of $B_p^{\mc T}(f,F,L,1)$ by using alternative methods. Moreover in the second case, where $L=f$, the author (in \cite{5}) uses the evaluation of $B_p^{\mc T}(f,F,f,1)$ and provides the evaluation of the more general $B_p^{\mc T}(f,F,f,k)$, $k\in (0,1]$.

Our aim (in the future) is to use the results of \cite{Nikol} and of this article in order to approach the following Bellman function problem (of three integral variables)

\begin{multline} \label{eq:0p8}
	B_{p,q}^{\mc T}(f,A,F) = \sup \left\{ \int_X \left(\mc M_{\mc T}\varphi\right)^p\mr \; d\mu: \varphi\geq 0, \int_X\varphi\,\mr \; d\mu=f, \right. \\  \left. \int_X\varphi^q\,\mr \; d\mu = A,\ \int_X\varphi^p\,\mr \; d\mu = F\right\},
\end{multline}
where $1<q<p$, and the variables $f,A,F$ lie in the domain of definition of the above problem.

In \cite{Nikol} we proved that whenever $0<\frac{x^q}{\kappa^{q-1}}<y\leq x^{\frac{p-q}{p-1}}\cdot z^{\frac{q-1}{p-1}}\;\Leftrightarrow\; 0<s_1^{\frac{q-1}{p-1}}\leq s_2<1$, (where $s_1,s_2)$ are defined right below), we find a constant $t=t(s_1,s_2)$ for which if $h:(0,\kappa]\longrightarrow\mb{R}^{+}$ satisfies  $\int_{0}^{\kappa}h=x$\,,\; $\int_{0}^{\kappa}h^q=y$ and $\int_{0}^{\kappa}h^p=z$ then

	$$\int_{0}^{\kappa}\bigg(\frac{1}{t}\int_{0}^{t}h\bigg)^p dt\leq t^p(s_1,s_2)\cdot\int_{0}^{\kappa}h^p,$$
where $t(s_1,s_2)=t$ is the greatest element of $\big[{1,t(0)}\big]$ for which $F_{s_1,s_2}(t)\leq 0$ and $F_{s_1,s_2}$ is defined in \cite{Nikol}. Moreover for each such fixed $s_1,s_2$ 
\[t=t(s_1,s_2)=\min\Big\{{t(\beta)\;:\;\beta\in\big[0,\tfrac{1}{p-1}\big]}\Big\}\]
where $t(\beta)=t(\beta,s_1,s_2)$ (see: \cite{Nikol}). That is we found a constant $t=t(s_1,s_2)$ for which the above inequality  is satisfied for all $h:(0,\kappa]\longrightarrow\mb{R}^{+}$ as mentioned above. Note that $s_1,s_2$ depend by a certain way on $x,y,z$, namely $s_1=\frac{x^p}{\kappa^{p-1}z}$,  $s_2=\frac{x^q}{\kappa^{q-1}y}$ and $F_{s_1,s_2}(\cdot)$ is given in terms of $s_1,s_2$.

In this article we study further the constant $t(s_1,s_2)=t$ which is found in \cite{Nikol}, by providing exact conditions under which the value of $t$ is given by either of the two function branches of its definition (see Section 3).

We need to mention that the extremizers for the standard Bellman function $B_p^{\mc T}(f,F,f,1)$ have been studied in \cite{7}, and in \cite{9} for the case $0<p<1$. Also in \cite{8} the extremal sequences of functions for the respective Hardy operator problem have been studied.  Additionally further study of the dyadic maximal operator can be seen in \cite{10,11} where symmetrization principles for this operator are presented, while other approaches for the determination of certain Bellman functions are given in \cite{13,14,15,16,17}. Moreover results related to applications of dyadic maximal operators can be seen in \cite{12}.

\section{Some technical lemmas}\label{sec:1}

We consider $f,A,F$ variables such that 
\begin{flushright}
$f^q<A<f^{\frac{p-q}{p-1}}\cdot F^{\frac{q-1}{p-1}}\,.\hspace{4.0cm}(*)$
\end{flushright}

We prove

\begin{lemma}\label{lem:1p1}
If $f,A,F$ satisfy the condition $\omega_q\big({\frac{f^q}{A}}\big)>\omega_p\big({\frac{f^p}{F}}\big)$ then there exists unique $\kappa\in\Big({\big({\frac{f^q}{A}}\big)^{\frac{1}{q-1}},1}\Big)$ for which the equality:  $\omega_q\big({\frac{f^q}{\kappa^{q-1}A}}\big)=\omega_p\big({\frac{f^p}{\kappa^{p-1}F}}\big)$, is true.
\end{lemma}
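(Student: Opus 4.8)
The plan is to convert the stated identity into a statement about a single real variable by introducing a suitable function of $\kappa$ and applying the intermediate value theorem, after establishing strict monotonicity. First I would recall from the discussion in Section~\ref{sec:0} that $\omega_p = H_p^{-1}$ maps $[0,1]$ onto $[1,\tfrac{p}{p-1}]$ and is continuous and strictly decreasing, since $H_p(z)=-(p-1)z^p+pz^{p-1}$ is continuous and strictly increasing on $[1,\tfrac{p}{p-1}]$ (its derivative $H_p'(z)=p(p-1)z^{p-2}(1-z)$ is negative there, hence $H_p$ is strictly decreasing in $z$ on that interval, so $\omega_p$ is strictly decreasing on its domain). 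Consequently, for fixed $f,F$ the map $\kappa\mapsto \omega_p\big(\tfrac{f^p}{\kappa^{p-1}F}\big)$ is strictly increasing in $\kappa$ on the range of $\kappa$ for which the argument stays in $(0,1]$, because $\kappa\mapsto \tfrac{f^p}{\kappa^{p-1}F}$ is strictly decreasing; similarly $\kappa\mapsto \omega_q\big(\tfrac{f^q}{\kappa^{q-1}A}\big)$ is strictly increasing.

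Next I would define
\[
\Phi(\kappa) = \omega_q\Big(\tfrac{f^q}{\kappa^{q-1}A}\Big) - \omega_p\Big(\tfrac{f^p}{\kappa^{p-1}F}\Big),
\]
on the interval $\kappa\in\Big(\big(\tfrac{f^q}{A}\big)^{\frac{1}{q-1}},\,1\Big]$. The lower endpoint is chosen exactly so that at $\kappa = \big(\tfrac{f^q}{A}\big)^{\frac{1}{q-1}}$ the argument $\tfrac{f^q}{\kappa^{q-1}A}$ equals $1$, whence $\omega_q(1)=1$; I should check that on this interval the argument of $\omega_p$ also lies in $(0,1]$, which follows from hypothesis $(*)$: the inequality $A < f^{\frac{p-q}{p-1}}F^{\frac{q-1}{p-1}}$ is exactly what forces $\tfrac{f^p}{F} < \big(\tfrac{f^q}{A}\big)^{\frac{p-1}{q-1}}$, so that $\tfrac{f^p}{\kappa^{p-1}F}\le 1$ throughout. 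Then $\Phi$ is continuous and, being the difference of an increasing function and another increasing function, one must argue monotonicity more carefully — I expect the clean route is to note that both terms are increasing, so I should instead track the sign of $\Phi$ at the two endpoints and invoke continuity plus a uniqueness argument separately.

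At the right endpoint $\kappa=1$, $\Phi(1) = \omega_q\big(\tfrac{f^q}{A}\big) - \omega_p\big(\tfrac{f^p}{F}\big) > 0$ by the hypothesis of the lemma. As $\kappa \downarrow \big(\tfrac{f^q}{A}\big)^{\frac{1}{q-1}}$, the first term tends to $\omega_q(1)=1$, its minimum possible value, while the second term tends to $\omega_p\big(\tfrac{f^p}{\kappa_0^{p-1}F}\big)$ with $\kappa_0 = \big(\tfrac{f^q}{A}\big)^{\frac{1}{q-1}}$; since that argument is strictly less than $1$ (again by $(*)$), the second term is strictly greater than $1$, so $\Phi(\kappa_0^+) < 0$. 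By the intermediate value theorem there is $\kappa\in(\kappa_0,1)$ with $\Phi(\kappa)=0$, which is the desired identity. For uniqueness, I would show $\Phi$ is strictly increasing: differentiate, or more robustly observe that the exponents $p-1 > q-1 > 0$ make the $\omega_p$-term's argument decay faster in $\kappa$ than the $\omega_q$-term's, so that the negative of the $\omega_p$-term dominates; the cleanest version is to reparametrize via $\kappa^{p-1}$ and use that $\kappa\mapsto\kappa^{q-1}$ composed appropriately is a concave increasing change of variable, yielding strict monotonicity of the difference.

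The main obstacle will be the uniqueness/strict-monotonicity step: the difference of two increasing functions need not be monotone, so I must exploit the specific relation between the exponents $q-1$ and $p-1$ and the explicit convexity properties of $\omega_p$ versus $\omega_q$ (or of their inverses $H_p, H_q$) rather than a soft argument. I expect this comes down to a one-variable calculus estimate comparing $\tfrac{d}{d\kappa}\omega_q\big(\tfrac{f^q}{\kappa^{q-1}A}\big)$ with $\tfrac{d}{d\kappa}\omega_p\big(\tfrac{f^p}{\kappa^{p-1}F}\big)$ at a putative common zero, using the implicit-function formula $\omega_p'(u) = 1/H_p'(\omega_p(u))$ and the constraint $H_p(\omega_p(u))=u$ to eliminate the unknowns, and should be routine once set up; everything else is continuity and endpoint sign-checking driven by condition~$(*)$.
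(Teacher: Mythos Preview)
Your plan is sound and will work, but it differs from the paper's argument in one key organizational choice. The paper does not study your difference $\Phi(\kappa)=\omega_q(\cdot)-\omega_p(\cdot)$ directly. Instead it first applies $H_q$ to the target identity and multiplies through by $\kappa^{q-1}$, reducing the problem to finding $\kappa$ with
\[
g(\kappa):=\kappa^{q-1}\,H_q\!\Big(\omega_p\big(\tfrac{f^p}{\kappa^{p-1}F}\big)\Big)=\tfrac{f^q}{A},
\]
on $\big[(f^p/F)^{1/(p-1)},1\big]$. A direct computation (substituting $t=\omega_p(s(\kappa))$ so that $s=H_p(t)$) collapses the derivative to the explicit positive expression
\[
g'(\kappa)=\tfrac{(q-1)(p-q)}{p}\,\kappa^{q-2}\,\omega_p\!\big(s(\kappa)\big)^{q},
\]
so $g$ is globally strictly increasing; the endpoint checks $g(1)>f^q/A$ (from the hypothesis) and $g\big((f^p/F)^{1/(p-1)}\big)=(f^p/F)^{(q-1)/(p-1)}<f^q/A$ (from $(*)$) finish existence and uniqueness in one stroke.

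Your route---IVT on $\Phi$ with the sign checks at $\kappa_0=(f^q/A)^{1/(q-1)}$ and $\kappa=1$, then uniqueness via $\Phi'>0$ at any zero---is equally valid. The ``at a putative common zero'' idea you sketch is exactly what makes your derivative comparison tractable: at a zero one has $\omega_q(u)=\omega_p(v)=:t$, and the implicit-function formula reduces $\Phi'(\kappa)$ to $\tfrac{1}{\kappa(t-1)}\big[\tfrac{H_q(t)}{qt^{q-2}}-\tfrac{H_p(t)}{pt^{p-2}}\big]=\tfrac{(p-q)t^2}{pq\,\kappa(t-1)}>0$. What the paper's transformation buys is a \emph{global} monotonicity statement with a one-line derivative, avoiding the need to localize at zeros; what your approach buys is that you never leave the original $\omega$-variables, at the cost of a slightly more delicate uniqueness step. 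Either way the substantive computation is the same algebraic identity between $H_p$ and $H_q$.
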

\begin{proof}\label{pro:1p1}
Consider $g:\Big[{\big({\frac{f^p}{F}}\big)^{\frac{1}{p-1}},1}\Big]\longrightarrow\mb{R}^{+}$ defined by 
\[g(\kappa)=\kappa^{q-1}\cdot H_q\Big({\omega_p\big({\tfrac{f^p}{\kappa^{p-1}F}}\big)}\Big)\,.\]
\makebox[\linewidth][s]{This is obviously well defined. We  prove that $g$ is strictly increasing on}\\ $\Big[{\big({\frac{f^p}{F}}\big)^{\frac{1}{p-1}},1}\Big]$\,. We calculate: 
\begin{align}\label{eq:1p1}
g'(\kappa)&=(q-1)\,\kappa^{q-2}\,H_q\big({\omega_p(s(\kappa))}\big)\,+\nonumber\\
&\hspace{3.0cm}+\kappa^{q-1}\,\frac{q(q-1)\,\omega_p(s(\kappa))^{q-2}\big({1-\omega_p(s(\kappa))}\big)}{p(p-1)\,\omega_p(s(\kappa))^{p-2}\big({1-\omega_p(s(\kappa))}\big)}\,s'(\kappa)=\nonumber\\
&=\kappa^{q-2}\,
\bigg[{(q-1)\,H_q\big({\omega_p(s)}\big)-\frac{q(q-1)}{p}\,\frac{1}{\omega_p(s)^{p-q}}\cdot s}\bigg]=\nonumber\\
&=\kappa^{q-2}\,(q-1)\,
\bigg[{H_q\big({\omega_p(s)}\big)-\frac{q}{p}\,\frac{s}{\omega_p(s)^{p-q}}}\bigg]\,,
\end{align}
where $s=s(\kappa)=\frac{f^p}{\kappa^{p-1}F}$\,.\\
We make the change of variables: $\omega_p(s)=t\;\Leftrightarrow\; s=H_p(t)$, where $t\in\big[{1,\frac{p}{p-1}}\big)$\,. Then (\ref{eq:1p1}) gives:
\begin{align}\label{eq:1p2}
g'(\kappa)&=\kappa^{q-2}\,(q-1)\,
\bigg[{H_q(t)-\frac{q}{p}\,\frac{H_p(t)}{t^{p-q}}}\bigg]=\nonumber\\
&=\kappa^{q-2}\,(q-1)\,
\bigg[{\big({q\,t^{q-1}-(q-1)\,t^q}\big)-\frac{q}{p}\,\frac{1}{t^{p-q}}\,\big({p\,t^{p-1}-(p-1)\,t^p}\big)}\bigg]=\nonumber\\
&=\kappa^{q-2}\,(q-1)\,
\bigg[{q\,t^{q-1}-(q-1)\,t^q-\frac{q}{p}\,\big({p\,t^{q-1}-(p-1)\,t^q}\big)}\bigg]=\nonumber\\
&=\kappa^{q-2}\,(q-1)\,
\bigg[{\frac{q}{p}\,(p-1)-(q-1)}\bigg]\,t^q=\nonumber\\
&=\kappa^{q-2}\,(q-1)\,
\bigg[{\frac{qp-q-qp+p}{p}}\bigg]\,t^q=\nonumber\\
&=\frac{(q-1)(p-q)}{p}\,\kappa^{q-2}\,\omega_p(s(\kappa))^{q}\,.
\end{align}
Thus  $g'(\kappa)>0\,,\;\forall\,\kappa\in\Big[{\big({\frac{f^p}{F}}\big)^{\frac{1}{p-1}},1}\Big]$\,.\\
Additionally: $g(1)=H_q\big({\omega_p\big({\tfrac{f^p}{F}}\big)}\big)>\frac{f^q}{A}$\,, by the hypothesis on $f,A,F$ that is stated in Lemma \ref{lem:1p1}.\\
Also  $g\big({\big({\tfrac{f^p}{F}}\big)^{\frac{1}{p-1}}}\big)=\big({\tfrac{f^p}{F}}\big)^{\frac{q-1}{p-1}}<\frac{f^q}{A}$\,, by condition ($\ast$), at the beginning of this section.\\
Thus there exists unique $\kappa\in\Big({\big({\frac{f^q}{A}}\big)^{\frac{1}{q-1}},1}\Big)$ for which 
\[g(\kappa)=\frac{f^q}{A}\;\;\Rightarrow\;\; \frac{f^q}{\kappa^{q-1}A}=H_q\Big({\omega_p\Big({\frac{f^p}{\kappa^{q-1}F}}\Big)}\Big)<1\,.\]
Thus for this $\kappa$, we should also have that $\kappa>\big({\frac{f^q}{A}}\big)^{\frac{1}{q-1}}$\,. Lemma \ref{lem:1p1} now proved.
\end{proof}

Conversely, it is easy to prove the following:
\begin{lemma}\label{lem:1p2}
If $f,A,F$ satisfy ($\ast$), and there exists $\kappa\in\Big({\big({\frac{f^q}{A}}\big)^{\frac{1}{q-1}},1}\Big)$, for which $\omega_q\big({\frac{f^q}{\kappa^{q-1}A}}\big)=\omega_p\big({\frac{f^p}{\kappa^{p-1}F}}\big)$, then $f,A,F$ should satisfy: $\omega_q\big({\frac{f^q}{A}}\big)>\omega_p\big({\frac{f^p}{F}}\big)$\,.
\end{lemma}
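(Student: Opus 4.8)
The plan is to reverse the argument of Lemma \ref{lem:1p1} by recycling the same auxiliary function. Under hypothesis $(\ast)$ one first notes $f^p<F$ (a short manipulation of the right-hand inequality in $(\ast)$, raising to the power $(p-1)/(q-1)$), so that
\[
g(\kappa)=\kappa^{q-1}\,H_q\!\left(\omega_p\!\left(\tfrac{f^p}{\kappa^{p-1}F}\right)\right),\qquad \kappa\in\left[\left(\tfrac{f^p}{F}\right)^{\frac1{p-1}},1\right],
\]
is well defined on a genuine interval, and the computation \eqref{eq:1p1}--\eqref{eq:1p2} --- which used only that $g$ makes sense, never the extra hypothesis of Lemma \ref{lem:1p1} --- shows that $g$ is strictly increasing on that interval.

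Next I would translate the hypothesis of Lemma \ref{lem:1p2} into a statement about $g$. Since $\kappa>\left(\tfrac{f^q}{A}\right)^{1/(q-1)}$ we have $\tfrac{f^q}{\kappa^{q-1}A}\in(0,1)$, so $\omega_q$ is legitimately applied to it and $H_q\!\left(\omega_q\!\left(\tfrac{f^q}{\kappa^{q-1}A}\right)\right)=\tfrac{f^q}{\kappa^{q-1}A}$. Applying $H_q$ to both sides of $\omega_q\!\left(\tfrac{f^q}{\kappa^{q-1}A}\right)=\omega_p\!\left(\tfrac{f^p}{\kappa^{p-1}F}\right)$ and multiplying through by $\kappa^{q-1}$ then yields
\[
\frac{f^q}{A}=\kappa^{q-1}\,H_q\!\left(\omega_p\!\left(\tfrac{f^p}{\kappa^{p-1}F}\right)\right)=g(\kappa).
\]

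The third step is to invoke the monotonicity. A direct manipulation of the right-hand inequality in $(\ast)$ gives $\tfrac{f^q}{A}>\left(\tfrac{f^p}{F}\right)^{(q-1)/(p-1)}$, hence $\left(\tfrac{f^q}{A}\right)^{1/(q-1)}>\left(\tfrac{f^p}{F}\right)^{1/(p-1)}$; combined with $\kappa<1$ this places $\kappa$ in the interior of the domain of $g$, so strict monotonicity of $g$ gives
\[
\frac{f^q}{A}=g(\kappa)<g(1)=H_q\!\left(\omega_p\!\left(\tfrac{f^p}{F}\right)\right).
\]
Finally I would apply $\omega_q$ to this inequality. Since $H_q$ is strictly decreasing from $[1,\tfrac{q}{q-1}]$ onto $[0,1]$, its inverse $\omega_q$ is strictly decreasing on $[0,1]$; and because $q<p$ we have $\omega_p\!\left(\tfrac{f^p}{F}\right)\in[1,\tfrac{p}{p-1}]\subset[1,\tfrac{q}{q-1}]$, so $\omega_q\circ H_q$ fixes $\omega_p\!\left(\tfrac{f^p}{F}\right)$. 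Hence the displayed strict inequality is reversed into
\[
\omega_q\!\left(\tfrac{f^q}{A}\right)>\omega_p\!\left(\tfrac{f^p}{F}\right),
\]
which is exactly the conclusion of Lemma \ref{lem:1p2}.

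I do not expect any real obstacle: this is the literal reversal of Lemma \ref{lem:1p1}. The only point needing attention is the routine bookkeeping of domains --- checking that $\tfrac{f^q}{\kappa^{q-1}A}$, $\tfrac{f^q}{A}$ and $\tfrac{f^p}{F}$ lie in $(0,1)$, that $\kappa$ lies in $\left[\left(\tfrac{f^p}{F}\right)^{1/(p-1)},1\right]$, and that $\omega_p\!\left(\tfrac{f^p}{F}\right)\le\tfrac{q}{q-1}$ --- each of which follows at once from $(\ast)$ together with the standing assumption $\kappa>\left(\tfrac{f^q}{A}\right)^{1/(q-1)}$.
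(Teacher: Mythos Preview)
Your proposal is correct and follows essentially the same route as the paper: translate the hypothesis into $g(\kappa)=\tfrac{f^q}{A}$, use the strict monotonicity of $g$ established in Lemma~\ref{lem:1p1} together with $\kappa<1$ to obtain $\tfrac{f^q}{A}<g(1)=H_q\big(\omega_p(\tfrac{f^p}{F})\big)$, and then apply $\omega_q$. You supply more careful domain bookkeeping than the paper does, but the argument is the same.
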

\begin{proof}
By the condition on $\kappa$, that is stated in this Lemma, it is implied that $g(\kappa)=\frac{f^q}{A}$\,.\\
Moreover 
since $\kappa<1$ we get $g(\kappa)<g(1)$, by the results of the previous Lemma \ref{lem:1p1}. That is 
\begin{align*}
\frac{f^q}{A}<g(1)&=H_q\bigg({\omega_p\Big({\frac{f^p}{F}}\Big)}\bigg)\quad\Rightarrow\quad
\omega_q\Big({\frac{f^q}{A}}\Big)>\omega_p\Big({\frac{f^p}{F}}\Big)\,.
\end{align*}
\end{proof}
\begin{lemma}\label{lem:1p3}
Suppose that $x,y,z$ are variables satisfying: $\frac{x^q}{\kappa^{q-1}}\leq y\leq x^{\frac{p-q}{p-1}}\cdot z^{\frac{q-1}{p-1}}$\,. Set $s_1=\frac{x^p}{\kappa^{p-1}z}$,  $s_2=\frac{x^q}{\kappa^{q-1}y}$, and assume that $\omega_p(s_1)=\omega_q(s_2):=\gamma$\,. We now set $t:=\omega_p(s_1)$\,. Then $F_{s_1,s_2}(t)=0$\,.
\end{lemma}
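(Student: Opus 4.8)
The plan is to reduce the statement to a direct algebraic identity, exploiting that the hypothesis collapses the two parameters $s_1,s_2$ to functions of the single number $\gamma$. Since $\omega_p=H_p^{-1}$ and $\omega_q=H_q^{-1}$, the assumption $\omega_p(s_1)=\omega_q(s_2)=\gamma$ is equivalent to
\[
s_1=H_p(\gamma)=p\gamma^{p-1}-(p-1)\gamma^{p},\qquad s_2=H_q(\gamma)=q\gamma^{q-1}-(q-1)\gamma^{q},
\]
with $\gamma=t$ by definition and $\gamma\in\big[1,\tfrac{p}{p-1}\big)$; the latter is forced by $s_1,s_2\in(0,1]$, which in turn follows from the constraint $\tfrac{x^q}{\kappa^{q-1}}\le y\le x^{\frac{p-q}{p-1}}z^{\frac{q-1}{p-1}}$ on $x,y,z$ together with $q<p$, so that $t$ lies inside the range $[1,t(0)]$ on which $F_{s_1,s_2}$ is considered and the quantity $F_{s_1,s_2}(t)$ in the statement is legitimately defined.

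First I would recall from \cite{Nikol} the closed form of $F_{s_1,s_2}(t)$ and substitute $s_1\mapsto H_p(\gamma)$, $s_2\mapsto H_q(\gamma)$, $t\mapsto\gamma$ throughout. Conceptually this should force $F_{s_1,s_2}(\gamma)=0$: the function $F_{s_1,s_2}$ is built so that the extremal profile on $(0,\kappa]$ (a power function whose exponent is governed by $\gamma$) makes the relevant parametrized inequality sharp, and at the matched values $s_1=H_p(\gamma)$, $s_2=H_q(\gamma)$ the two side conditions $\int_0^{\kappa}h^{q}=y$ and $\int_0^{\kappa}h^{p}=z$ become simultaneously tight for that same exponent, so the inequality degenerates to an equality there. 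To run the computation cleanly I would keep along the identities $H_p'(t)=p(p-1)t^{p-2}(1-t)$, $H_q'(t)=q(q-1)t^{q-2}(1-t)$ and $\omega_p'(s)=1/H_p'(\omega_p(s))$ (and its $q$-analogue), exactly as in the proof of Lemma \ref{lem:1p1}, so that any composite factors such as $H_p(t)/t^{p-q}$ or $s_1/\omega_p(s_1)^{p-q}$ reduce to genuine polynomials in $\gamma$; the final cancellation should then have the same flavour as the collapse of the four-term bracket to a single monomial in \eqref{eq:1p2}.

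A more structural alternative uses the two descriptions of $t(s_1,s_2)$ recorded in the introduction — it is the largest $t\in[1,t(0)]$ with $F_{s_1,s_2}(t)\le 0$, and it equals $\min\{t(\beta):\beta\in[0,\tfrac1{p-1}]\}$, where $t(\beta)=t(\beta,s_1,s_2)$ solves the defining equation of the $\beta$-th member of the family of inequalities of the type \eqref{eq:0p7} (now also carrying the $L^q$-constraint). Since $F_{s_1,s_2}$ is continuous and positive just below $t(0)$ for admissible $s_1,s_2$, establishing $F_{s_1,s_2}(\gamma)=0$ comes down to identifying $\gamma$ with $t(s_1,s_2)$, i.e. to exhibiting the minimizing parameter $\beta_0$ and checking $t(\beta_0)=\gamma$; the natural candidate is the $\beta_0$ for which that one-parameter inequality is sharp with the power-function extremizer of exponent $\gamma$, for which the defining equation of $t(\beta_0)$ is by construction exactly $F_{s_1,s_2}(\gamma)=0$. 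I would present whichever of the two routes is shorter once the precise form of $F_{s_1,s_2}$ is in hand.

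The expected main obstacle is bookkeeping rather than conceptual: faithfully reproducing the definition of $F_{s_1,s_2}$ from \cite{Nikol} and pushing the substitution through without sign errors, since the expression mixes $t^{p},t^{q},t^{p-1},t^{q-1}$ with $s_1$ and $s_2$ and only telescopes after the correct regrouping (just as the bracket in \eqref{eq:1p2} must be collected in the right way before it reduces to $\tfrac{(q-1)(p-q)}{p}\,t^{q}$). A minor additional point to check is that the boundary case $\gamma=1$ (where $h$ is essentially constant) is covered and that $\gamma$ does not exceed $t(0)$.
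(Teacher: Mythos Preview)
Your primary plan --- substitute $s_1=H_p(\gamma)$, $s_2=H_q(\gamma)$, $t=\gamma$ into the closed form of $F_{s_1,s_2}$ and check a polynomial identity --- is exactly what the paper does. The one piece you do not flag, and which is the actual hinge of the computation, is that $F_{s_1,s_2}(t_1)$ is written through $\omega_q(\tau)$, where
\[
\tau=\tau(t_1)=\frac{p-q}{p}\,\frac{t_1^{p}-s_1}{t_1^{p-q}-s_1/s_2}\,.
\]
A blind substitution of $s_1,s_2,t_1$ still leaves the transcendental term $\omega_q(\tau)$ sitting there; nothing on your list of ``composite factors'' (such as $H_p(t)/t^{p-q}$ or $s_1/\omega_p(s_1)^{p-q}$) covers it. The paper's route is to show that at $t_1=\gamma$ one has $\tau(\gamma)=H_q(\gamma)=s_2$ --- this is the polynomial identity \eqref{eq:1p7} --- so that $\omega_q(\tau)=\gamma$; only then does $F_{s_1,s_2}(\gamma)$ become an honest polynomial expression in $\gamma$, and the chain of equivalences leading to \eqref{eq:1p6} goes through. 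Once you have this observation the bookkeeping is short; without it the substitution does not close.

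Two smaller remarks. The range check $\gamma\le t(0)$ that you defer as ``minor'' is carried out in the paper as a separate polynomial inequality (their $G(t)\le 0$); however, if you reorder and prove $\tau(\gamma)=s_2$ first, then $\tau(\gamma)=s_2\le 1=\tau(t(0))$ gives $\gamma\le t(0)$ for free, so your instinct that it is cheap is correct \emph{provided} you already have the key identity above. Your ``structural alternative'' runs the logic backwards: in the paper the identification of $\gamma$ with the minimizer $t(s_1,s_2)$ is \emph{deduced from} $F_{s_1,s_2}(\gamma)=0$ (Corollary~\ref{cor:1p1}), not used to prove it, and your assertion that $F_{s_1,s_2}$ is ``positive just below $t(0)$'' already presupposes $t'(0)<0$, which is itself a consequence of the lemma.
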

\begin{proof}
By definition (see \cite{Nikol}) 
\[F_{s_1,s_2}(t_1)=q\,\big({p\,\omega_q(\tau)^{q-1}-(p-1)\,\omega_q(\tau)^q}\big)\Big({t_1^{p-q}-\frac{s_1}{s_2}}\Big)-(p-q)\,s_1\,\alpha(s_2)\,,\]
where $t_1\in\big[{1,t(0)}\big]$\,, \, $\tau=\tau(t_1)=\dfrac{p-q}{p}\,\dfrac{t_1^p-s_1}{t_1^{p-q}-\frac{s_1}{s_2}}$\,, \, $\tau(t(0))=1$\,, and $\alpha(s_2)=\frac{\omega_q(s_2)^q}{s_2}-1$\,.\\
Initially we prove that $\tau(t)\leq 1$, when $t=\omega_p(s_1)=\gamma$\,. For this purpose we need to prove that
\begin{align}\label{eq:1p3}
\dfrac{p-q}{p}\,\dfrac{\gamma^p-s_1}{\gamma^{p-q}-\frac{s_1}{s_2}}\leq 1\quad&\Leftrightarrow\quad (p-q)(\gamma^p-s_1)\leq p\Big({\gamma^{p-q}-\frac{s_1}{s_2}}\Big)\nonumber\\
&\Leftrightarrow\quad (p-q)\gamma^p-p\,\gamma^{p-q}\leq (p-q)s_1-p\,\frac{s_1}{s_2}\,,
\end{align}
where $\gamma=\omega_p(s_1)$\,. Define the function $\varphi(y)=(p-q)\,y^p-p\,y^{p-q}\,,\; y\geq 1$\,. Then $\varphi(y)$ is strictly increasing on $[1,+\infty)$. We need to prove (\ref{eq:1p3}). By hypothesis $t=\omega_p(s_1)=\omega_q(s_2)\;\Rightarrow\; s_2=H_q\big({\omega_p(s_1)}\big)=H_q(t)$\,. Thus for the proof of (\ref{eq:1p3}) is enough to justify
\begin{align*}
(p-q)\,t^p-p\,t^{p-q}&\leq (p-q)\,H_p(t)-p\,\frac{H_p(t)}{H_q(t)}&\Leftrightarrow\\
H_q(t)\,\big[{(p-q)\,t^p-p\,t^{p-q}}\big]&\leq(p-q)\,H_p(t)\cdot H_q(t)-p\,H_p(t)\,.
\end{align*}
The last stated inequality is easily seen to be equivalent to the following:
\[G(t):=(2pq-p)\,t^q-(q-1)p\,t^{q+1}-pq\,t^{q-1}-p\,t+p\leq0\]
which is true since $t\geq1$\,,\; $G(1)=0$, and 
\begin{align*}
G(t)=-p(t-1)+pq\,t^q-pq\,t^{q+1}+p\,t^{q}(t-1)+pq\,t^{q-1}(t-1)\,.
\end{align*}
Then if $t=1\;\Rightarrow\;G(t)=0$\,, while if $t>1$ we get that 
\begin{align*}
G(t)&=-p(t-1)+pq\,t^q-pq\,t^{q+1}+p\,t^{q}(t-1)+pq\,t^{q-1}(t-1)\\
    &\cong -p+pq(-t^q)+p\,t^q+pq\,t^{q-1}\\
    &\cong -1-q\,t^q+t^q+q\,t^{q-1}\\
    &=q\,t^{q-1}-(q-1)\,t^q-1\\
    &=H_q(t)-1<0\,,
\end{align*}
since $t>1$\,.\\
As we have already seen, when $s_1<1\;\Leftrightarrow\; t=\omega_p(s_1)>1$ we have that $\tau(t)<1$ (this is a consequence of the proof that is already given in \cite{Nikol}).\\
Thus $\tau(t)<1=\tau(t(0))\;\Rightarrow\; 1<t=\omega_p(s_1)=\gamma<t(0)$.\\
We finally prove that 
\begin{align}\label{eq:1p4}
F_{s_1,s_2}(t)&=0\quad\Leftrightarrow\nonumber\\
q\,\big({p\,\omega_q(\tau)^{q-1}-(p-1)\,\omega_{q}(\tau)^q}\big)\Big({t^{p-q}-\frac{s_1}{s_2}}\Big)&=(p-q)\,s_1\cdot\alpha(s_2)
\end{align}
Note that 
\begin{align*}
&p\,\omega_q(\tau)^{q-1}-(p-1)\,\omega_q(\tau)^q=\\
&q\,\omega_q(\tau)^{q-1}+(p-q)\,\omega_q(\tau)^{q-1}-(q-1)\,\omega_q(\tau)^q-(p-q)\,\omega_q(\tau)^q=\\
&\big[{q\,\omega_q(\tau)^{q-1}-(q-1)\,\omega_q(\tau)^q}\big]+(p-q)\,\omega_q(\tau)^{q-1}\big({1-\omega_q(\tau)}\big)=\\
&H_q\big({\omega_q(\tau)}\big)+(p-q)\,\omega_q(\tau)^{q-1}\big({1-\omega_q(\tau)}\big)=\\
&\tau+(p-q)\,\omega_q(\tau)^{q-1}\big({1-\omega_q(\tau)}\big)\,.
\end{align*}
Thus for the proof of (\ref{eq:1p4}) it's enough to show that:
\begin{align}\label{eq:1p5}
q(t^p-s_1)+pq\,\big({\omega_q(\tau)^{q-1}-\omega_q(\tau)^q}\big)\Big({t^{p-q}-\frac{s_1}{s_2}}\Big)=p\,s_1\cdot\alpha(s_2)
\end{align}
(\ref{eq:1p5}) is then equivalent to:
\begin{align}\label{eq:1p6}
q(t^p-s_1)+p\,\big({\tau-\omega_q(\tau)^q}\big)\Big({t^{p-q}-\frac{s_1}{s_2}}\Big)&=p\,s_1\cdot\alpha(s_2)&\Leftrightarrow\footnotemark\nonumber\\
(t^p-s_1)-\Big({t^{p-q}-\frac{s_1}{s_2}}\Big)\,\omega_q(\tau)^q&=s_1\cdot\alpha(s_2)&\Leftrightarrow\nonumber\\
\frac{p}{p-q}\,\tau-\omega_q(\tau)^q&=\frac{s_1\big({\frac{t^q}{s_2}-1}\big)}{t^{p-q}-\frac{s_1}{s_2}}&\Leftrightarrow\nonumber\\
\omega_q(\tau)^q&=\frac{t^{p}-\frac{s_1}{s_2}\,t^q}{t^{p-q}-\frac{s_1}{s_2}}&\Leftrightarrow\nonumber\\
\omega_q(\tau)&=t\,.
\end{align}
\footnotetext{by definition of $\tau$.}
We prove (\ref{eq:1p6}). We have: (\ref{eq:1p6})\quad$\Leftrightarrow\quad\tau=H_q(t)\quad\Leftrightarrow$
\begin{align}\label{eq:1p7}
\frac{p-q}{p}\,\frac{t^{p}-s_1}{t^{p-q}-\frac{s_1}{s_2}}=s_2\quad\Leftrightarrow\quad p\,s_2\,t^{p-q}-(p-q)\,t^{p}=q\,s_1\,.
\end{align}
But 
\begin{align}\label{eq:1p8}
&\frac{H_p(t)}{H_q(t)}=\frac{s_1}{s_2}\quad\Rightarrow\quad\frac{p\,t^{p-1}-(p-1)\,t^{p}}{q\,t^{q-1}-(q-1)\,t^{q}}=\frac{s_1}{s_2}\quad\Rightarrow\nonumber\\
&p\,s_2\,t^{p-q}=(p-1)s_2\,t^{p-q+1}+s_1\big({q-(q-1)t}\big)\,.
\end{align}
Thus in view of (\ref{eq:1p8}), (\ref{eq:1p7}) is equivalent to 
\begin{align*}
&(p-q)\,t^{p}=(p-1)s_2\,t^{p-q+1}-s_1(q-1)t&\Leftrightarrow\\
&(p-q)\,t^{p-1}=(p-1)s_2\,t^{p-q}-(q-1)s_1&\Leftrightarrow\\
&(p-q)\,t^{p-1}-(p-1)\,H_q(t)\,t^{p-q}+(q-1)\,H_p(t)=0\,,
\end{align*}
which is true in view of the definition of $H_p(t)$ and $H_q(t)$. Lemma \ref{lem:1p3} is now proved.
\end{proof}

As a consequence of Lemma \ref{lem:1p3} we obtain that if 
\begin{align}\label{eq:1p9}
\omega_p(s_1)=\omega_q(s_2)=\gamma>1\,,
\end{align}
then $F_{s_1,s_2}\big({\omega_p(s_1)}\big)=0$\,, with $1<\gamma=\omega_p(s_1)<t(0)$\,. Thus (see: \cite{Nikol}) we must have (in case where (\ref{eq:1p9}) is true) that $t'(0)<0$\,. Indeed if  $t'(0)\geq0$ (see the comments in the end of Section 4  in \cite{Nikol}) we should have $F_{s_1,s_2}\big({t(0)}\big)\leq0$. But we already have that $F_{s_1,s_2}(\gamma)=0$\,, where $\gamma<t(0)\;\Rightarrow\; F_{s_1,s_2}(\gamma)<F_{s_1,s_2}\big({t(0)}\big)\;\Rightarrow\; F_{s_1,s_2}\big({t(0)}\big)>0$\,, which is contradiction.\\
Thus (see \cite{Nikol}), we have that there exists $\beta\in \big({0,\frac{1}{p-1}}\big)$ :\; 
$t'(\beta)=0$ for which
\begin{align}\label{eq:1p10}
t(\beta)=\min\Big\{{t(\gamma_1)\;:\;\gamma_1\in\big[0,\tfrac{1}{p-1}\big]}\Big\}\,.	
\end{align}
But for this $\beta$, we have: \; $1\leq t(\beta)<t(0)$ and $F_{s_1,s_2}\big({t(\beta)}\big)=0$ and since $\gamma=\omega_p(s_1)$ also satisfies  $F_{s_1,s_2}(\gamma)=0$, we obtain
\[t(\beta)=\omega_p(s_1)\,.\]
We state the above comments as
\begin{corollary}\label{cor:1p1}
If the variables $s_1,s_2$ satisfy $\omega_p(s_1)=\omega_q(s_2)$ then there exists  $\beta\in \big({0,\frac{1}{p-1}}\big)$ :\; 
$t'(\beta)=0$ for which
\begin{align*}
	t(\beta)=\min\Big\{{t(\gamma_1)\;:\;\gamma_1\in\big[0,\tfrac{1}{p-1}\big]}\Big\}=\omega_p(s_1)\,.	
\end{align*}
\end{corollary}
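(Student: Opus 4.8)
The plan is to obtain Corollary~\ref{cor:1p1} as a direct consequence of Lemma~\ref{lem:1p3} combined with the structural information about the map $\beta\mapsto t(\beta)$ and the sign of $F_{s_1,s_2}$ recorded in \cite{Nikol}; in fact this is exactly the chain of implications laid out in the paragraph preceding the statement, which I would organize into the following steps.

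\textbf{Step 1.} Apply Lemma~\ref{lem:1p3} with the common value $\gamma:=\omega_p(s_1)=\omega_q(s_2)$ to get $F_{s_1,s_2}(\gamma)=0$. Since $s_1<1$ we have $\gamma>1$, and the computation inside the proof of Lemma~\ref{lem:1p3} shows $\tau(\gamma)<1=\tau(t(0))$; using that $\tau$ is monotone (as established in \cite{Nikol}) this upgrades to the strict two-sided bound $1<\gamma<t(0)$.

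\textbf{Step 2.} Exclude the case $t'(0)\geq 0$. The relevant fact from the end of Section~4 of \cite{Nikol} is that $t'(0)\geq 0$ forces $F_{s_1,s_2}(t(0))\leq 0$. On the other hand $F_{s_1,s_2}$ is strictly increasing on $[1,t(0)]$, so from $F_{s_1,s_2}(\gamma)=0$ together with $\gamma<t(0)$ we would get $F_{s_1,s_2}(t(0))>0$, a contradiction. Hence $t'(0)<0$.

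\textbf{Step 3.} Invoke the analysis of \cite{Nikol} once more: when $t'(0)<0$ there is an interior point $\beta\in\bigl(0,\tfrac{1}{p-1}\bigr)$ with $t'(\beta)=0$ at which $t$ attains its minimum over $\bigl[0,\tfrac{1}{p-1}\bigr]$, and every such critical point satisfies $1\leq t(\beta)<t(0)$ and $F_{s_1,s_2}(t(\beta))=0$. \textbf{Step 4.} Both $t(\beta)$ and $\gamma=\omega_p(s_1)$ are zeros of $F_{s_1,s_2}$ lying in $[1,t(0))$; since $F_{s_1,s_2}$ has a unique zero there, $t(\beta)=\omega_p(s_1)$, which is the asserted equality.

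The step I expect to require the most care is the interface with \cite{Nikol}: one must cite accurately both the implication $t'(0)\geq 0\Rightarrow F_{s_1,s_2}(t(0))\leq 0$ and the fact that the minimizing $\beta$ is an interior critical point with $F_{s_1,s_2}(t(\beta))=0$, and one must check that the monotonicity of $F_{s_1,s_2}$ on $[1,t(0)]$ (hence uniqueness of its zero there) is genuinely available from that paper in the generality needed here. Once those external ingredients are in hand, Steps~1--4 are essentially bookkeeping.
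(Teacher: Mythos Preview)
Your proposal is correct and follows essentially the same approach as the paper: the argument preceding the corollary is precisely your Steps~1--4, namely Lemma~\ref{lem:1p3} gives $F_{s_1,s_2}(\gamma)=0$ with $1<\gamma<t(0)$, the implication $t'(0)\geq 0\Rightarrow F_{s_1,s_2}(t(0))\leq 0$ from \cite{Nikol} together with strict monotonicity of $F_{s_1,s_2}$ forces $t'(0)<0$, the interior-minimizer result from \cite{Nikol} yields $\beta$ with $F_{s_1,s_2}(t(\beta))=0$ and $t(\beta)<t(0)$, and uniqueness of the zero of $F_{s_1,s_2}$ on $[1,t(0))$ gives $t(\beta)=\omega_p(s_1)$. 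Your caveat about needing the monotonicity/uniqueness of $F_{s_1,s_2}$ from \cite{Nikol} is well placed, since the paper uses exactly that fact (implicitly) in the final identification step.
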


\begin{lemma}\label{lem:1p4}
For each $\ell\in(-\infty,0]$ the following equation is true:
\begin{align*}
\lim\limits_{\alpha\to0^{+}}\alpha\cdot\omega_p\Big({\frac{\ell}{\alpha}}\Big)^p=-\frac{1}{p-1}\cdot\ell\,.
\end{align*}
\end{lemma}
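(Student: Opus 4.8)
The plan is to reduce the statement to the defining relation $H_p\bigl(\omega_p(\cdot)\bigr)=\mr{id}$ together with one elementary limit. The only conceptual point is to make precise in what sense $\omega_p(\ell/\alpha)$ is to be read: although $\omega_p$ was introduced as the inverse of $H_p$ on $\big[1,\tfrac{p}{p-1}\big]$, the function $H_p(z)=-(p-1)z^p+pz^{p-1}$ satisfies $H_p'(z)=p(p-1)z^{p-2}(1-z)<0$ for $z>1$, with $H_p(1)=1$ and $H_p(z)\to-\infty$ as $z\to+\infty$. Hence $H_p$ is a strictly decreasing bijection of $[1,+\infty)$ onto $(-\infty,1]$, and $\omega_p=H_p^{-1}$ extends to all of $(-\infty,1]$. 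This is the sense in which $\omega_p(\ell/\alpha)$ is understood here: for $\ell\leq0$ and $\alpha>0$ we have $\ell/\alpha\leq0<1$, so $\omega_p(\ell/\alpha)\geq1$ is well defined, and $\omega_p$ is monotone on $(-\infty,1]$.

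Next I would dispose of the degenerate case $\ell=0$ separately, since it cannot be handled by the generic computation below (it leads to a $0/0$ indeterminacy). Here $\omega_p(0)$ is the unique $z\geq1$ with $z^{p-1}\bigl(p-(p-1)z\bigr)=0$, namely $z=\tfrac{p}{p-1}$, so $\alpha\cdot\omega_p(0)^p=\alpha\bigl(\tfrac{p}{p-1}\bigr)^p\to0=-\tfrac{1}{p-1}\cdot0$, as required.

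For $\ell<0$ I would substitute $z=z(\alpha):=\omega_p(\ell/\alpha)$, so that $H_p(z)=\ell/\alpha$, equivalently $\alpha=\ell/H_p(z)$. As $\alpha\to0^{+}$ we have $\ell/\alpha\to-\infty$, hence $z\to+\infty$ by the monotonicity above. Then
\[
\alpha\cdot\omega_p\Big(\tfrac{\ell}{\alpha}\Big)^p=\frac{\ell\,z^p}{H_p(z)}=\frac{\ell\,z^p}{-(p-1)z^p+p\,z^{p-1}}=\frac{\ell}{-(p-1)+\tfrac{p}{z}}\,,
\]
and letting $z\to+\infty$ yields $\dfrac{\ell}{-(p-1)}=-\dfrac{1}{p-1}\cdot\ell$, which is the claim.

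I do not anticipate a genuine obstacle: the substitution $z=\omega_p(\ell/\alpha)$ turns the limit into a trivial rational limit at infinity. The one place that requires a word of care is the preliminary step, namely recording that $\omega_p$ is to be taken in its extended (monotone) sense on $(-\infty,1]$; once that is in place, everything is routine.
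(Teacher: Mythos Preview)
Your argument is correct. The route differs from the paper's: after the same preliminary remarks (extension of $\omega_p$ to $(-\infty,1]$ and the trivial case $\ell=0$), the paper substitutes $s=1/\alpha$, writes $L=\lim_{s\to+\infty}\omega_p(\ell s)^p/s$, and applies L'H\^opital's rule, using $\omega_p'(u)=\bigl(p(p-1)\omega_p(u)^{p-2}(1-\omega_p(u))\bigr)^{-1}$ to reduce to $\lim_{s\to+\infty}\tfrac{\ell}{p-1}\cdot\tfrac{\omega_p(\ell s)}{1-\omega_p(\ell s)}=-\tfrac{\ell}{p-1}$. You instead invert the substitution by setting $z=\omega_p(\ell/\alpha)$ and using the defining relation $H_p(z)=\ell/\alpha$ to write $\alpha\,z^p=\ell z^p/H_p(z)$, which is a bare rational expression in $z$ with limit $-\ell/(p-1)$. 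Your approach is the more elementary of the two: it avoids differentiation and L'H\^opital altogether, at the cost only of making explicit the bijection $H_p:[1,+\infty)\to(-\infty,1]$, which the paper uses implicitly anyway. Both arrive at the same conclusion with comparable brevity.
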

\begin{proof}
We have
\begin{align}\label{eq:1p11}
L:=\lim\limits_{\alpha\to0^{+}}\alpha\cdot\omega_p\Big({\frac{\ell}{\alpha}}\Big)^p=\lim\limits_{s\to+\infty}\frac{\omega_p(\ell\cdot s)^p}{s}\,.
\end{align}
Assume that $\ell<0$, then as $s\to+\infty$, $\ell s\to-\infty$\,, that $\omega_p(\ell s)\to+\infty$. Thus by (\ref{eq:1p11})
\begin{align*}
	L=\lim\limits_{s\to+\infty}\bigg[{p\,\omega_p(\ell s)^{p-1}\frac{d}{ds}\big({\omega_p(\ell s)}\big)}\bigg]\,.
\end{align*}
We compute 
\begin{align*}
\frac{d}{ds}\omega_p(\ell s)&=\ell\,\omega'_p(\ell s)=\ell\,(H^{-1}_p)'(\ell s)=\\
&=\ell\,\frac{1}{p(p-1)\,\omega_p(\ell s)^{p-2}\big({1-\omega_p(\ell s)}\big)}\,.
\end{align*}
Thus
\begin{align*}
	L=\lim\limits_{s\to+\infty}\bigg[{\frac{\ell}{p-1}\,\frac{\omega_p(\ell s)}{1-\omega_p(\ell s)}}\bigg]=-\frac{1}{p-1}\,\ell\,.
\end{align*}
Note also that for $\ell=0$, obviously $L=0$\,.
\end{proof}
We now prove the following
\begin{lemma}\label{lem:1p5}
If $1<q<p$ and $\lambda\in[0,1)$ the inequality 
\[\omega_p(\lambda^{p-1})<\omega_q(\lambda^{q-1})\,,\]
is true.
\end{lemma}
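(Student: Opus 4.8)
The plan is to collapse the claimed inequality $\omega_p(\lambda^{p-1})<\omega_q(\lambda^{q-1})$ onto an elementary one‑variable estimate. First I would put $t:=\omega_p(\lambda^{p-1})$, so that $H_p(t)=\lambda^{p-1}$ and, because $\lambda<1$ forces $\lambda^{p-1}<1=H_p(1)$ together with the strict monotonicity of $\omega_p$, one has $t\in(1,p/(p-1)]$. Since $1<q<p$ gives $\tfrac{q}{q-1}>\tfrac{p}{p-1}\ge t$, the point $t$ lies in the range of $\omega_q$, hence $\omega_q(H_q(t))=t$, and $H_q(t)\in[0,1]$. Using that $\omega_q$ is strictly decreasing on $[0,1]$, the statement to be proved becomes equivalent to
\[
\lambda^{q-1}<H_q(t).
\]

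The key manipulation is the factorization $H_r(z)=z^{r-1}\bigl(r-(r-1)z\bigr)=z^{r-1}\bigl(1-(r-1)(z-1)\bigr)$, valid for $r\in\{q,p\}$. Writing $x:=t-1\in(0,1/(p-1)]$ (positive since $t>1$; bounded by $1/(p-1)$ since $\lambda^{p-1}\ge 0$), the relation $H_p(t)=\lambda^{p-1}$ rewrites as $\lambda=t\bigl(1-(p-1)x\bigr)^{1/(p-1)}$. Substituting this into $\lambda^{q-1}<H_q(t)=t^{q-1}\bigl(1-(q-1)x\bigr)$, dividing by $t^{q-1}>0$, and taking $(q-1)$-th roots, the inequality becomes
\[
\bigl(1-(p-1)x\bigr)^{1/(p-1)}<\bigl(1-(q-1)x\bigr)^{1/(q-1)}.
\]
The boundary case $x=1/(p-1)$, i.e.\ $\lambda=0$, is immediate since the left side vanishes while the right side is positive (as $(q-1)/(p-1)<1$), so it remains to handle $x\in(0,1/(p-1))$.

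For that I would introduce $G(s):=\frac1s\ln(1-sx)$ on $s\in(0,1/x)$ and show it is strictly decreasing; then, since $q-1<p-1$ both lie in $(0,1/x)$ (because $(p-1)x<1$), we get $G(q-1)>G(p-1)$, which is exactly the displayed inequality after exponentiation. The monotonicity of $G$ reduces, via $G'(s)=-s^{-2}\bigl[\ln(1-sx)+\tfrac{sx}{1-sx}\bigr]$, to the positivity on $(0,1)$ of $h(y):=\ln(1-y)+\tfrac{y}{1-y}$; and this follows from $h(0)=0$ together with $h'(y)=y(1-y)^{-2}>0$. Chaining the equivalences back up yields $\omega_p(\lambda^{p-1})<\omega_q(\lambda^{q-1})$.

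The only place that requires genuine care — rather than real difficulty — is the reduction step: one must keep track of the domains so that $t$ actually falls inside the range of $\omega_q$ (this is precisely where $q<p$ is used), that every quantity to which $\omega_q$ or a root is applied stays in $[0,1]$ resp.\ is nonnegative, and that the $\lambda=0$ edge case is isolated before taking roots. The analytic heart, the estimate $\ln(1-y)+\tfrac{y}{1-y}>0$, is completely routine. An equivalent packaging would be to fix $t$ and prove directly that $r\mapsto\frac{1}{r-1}\ln H_r(t)$ is strictly decreasing in $r$, which amounts to the same computation.
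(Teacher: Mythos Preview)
Your argument is correct, and it differs from the paper's. Both proofs start with the same equivalent reformulation $H_q(\omega_p(\lambda^{p-1}))>\lambda^{q-1}$, but then diverge. The paper keeps the variable $t=\lambda^{p-1}$, sets $G_1(t)=H_q(\omega_p(t))-t^{(q-1)/(p-1)}$, and differentiates through $\omega_p$ using $\omega_p'(t)=1/H_p'(\omega_p(t))$; after some algebra this yields $G_1'(t)\cong \tfrac{q}{p}\omega_p(t)^{-(p-q)}-t^{-(p-q)/(p-1)}<0$, and since $G_1(1)=0$ the claim follows. You instead substitute $t=\omega_p(\lambda^{p-1})$ so that $\omega_p$ disappears altogether, exploit the factorization $H_r(z)=z^{r-1}\bigl(1-(r-1)(z-1)\bigr)$, and reduce everything to the classical one--variable fact that $s\mapsto(1-sx)^{1/s}$ is strictly decreasing on $(0,1/x)$, equivalent to $\ln(1-y)+y/(1-y)>0$ on $(0,1)$. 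Your route is more elementary---it never touches the derivative of the implicit function $\omega_p$---and it isolates the reason the inequality holds as a standard convexity-type estimate; the paper's route is more direct but computationally heavier. Your domain bookkeeping (that $t\le p/(p-1)<q/(q-1)$, that $1-(q-1)x>0$, and the separate treatment of $\lambda=0$) is exactly the care needed, and it is handled correctly.
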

\begin{proof}
The inequality stated in this lemma is equivalent to 
\begin{align}\label{eq:1p12}
H_q\big({\omega_p(\lambda^{p-1})}\big)&>\lambda^{q-1}\,,\quad\forall\,\lambda\in[0,1)\quad\Leftrightarrow\nonumber\\
q\,{\omega_p(\lambda^{p-1})}^{q-1}-(q-1)\,{\omega_p(\lambda^{p-1})}^{q}&>\lambda^{q-1}\,,\quad\forall\,\lambda\in[0,1)\,.
\end{align}
We set $\lambda^{p-1}=t\in[0,1)$ in (\ref{eq:1p12}) and thus it is sufficient to prove
\begin{align}\label{eq:1p13}
q\,{\omega_p(t)}^{q-1}-(q-1)\,{\omega_p(t)}^{q}-t^{\frac{q-1}{p-1}}>0\,,\quad\forall\,t\in[0,1)\,.
\end{align}
Denote by $G_1(t)$ the left side of (\ref{eq:1p13}), for each $t\in[0,1)$\,. Then 
\begin{align*}
G'_1(t)&=\big[{H_q\big({\omega_p(t)}\big)}\big]'-\frac{q-1}{p-1}\,\frac{1}{t^{\frac{p-q}{p-1}}}=\\
 &=\frac{q(q-1)\,\omega_p(t)^{q-2}\big({1-\omega_p(t)}\big)}{p(p-1)\,\omega_p(t)^{p-2}\big({1-\omega_p(t)}\big)}-\frac{q-1}{p-1}\,\frac{1}{t^{\frac{p-q}{p-1}}}\cong\\
 &\cong \frac{q}{p}\,\frac{1}{\omega_p(t)^{p-q}}-\frac{1}{t^{\frac{p-q}{p-1}}}<\frac{q}{p}-1<0\,,\quad\forall\,t\in(0,1)\,.
\end{align*}
Thus $G_1$ is strictly decreasing function of $t\in(0,1)$, thus is satisfies $G_1(t)>G_1(1)\,,\;\forall\,t\in(0,1)$, while $G_1(1)=0$, so that $G_1(t)\geq0\,,\;\forall\,t\in[0,1)$ and the Lemma is proved.
\end{proof}

\section{Properties of the constant $t(s_1,s_2)$}

We set $D=\big\{{(s_1,s_2)\in\mb{R}^2\;:\; 0<s_1^{q-1}\leq s_2^{p-1}<1}\big\}\subset\mb{R}^2$\,.\\
As we have seen in \cite{Nikol} we have associated, to any $(s_1,s_2)\in D$  and any $\beta\in \big[{0,\frac{1}{p-1}}\big]$, a real number $t(s_1,s_2,\beta)=t_{s_1,s_2}(\beta)=t(\beta)$ according to the results of \cite{Nikol}. If $t'(0)<0$, we define by $t=t_{s_1,s_2}=t(s_1,s_2)$ the number $t=\min\Big\{{t(\gamma)\;:\;\gamma\in\big[0,\tfrac{1}{p-1}\big]}\Big\}$ which satisfies the relation $F_{s_1,s_2}(t)=0$ (see \cite{Nikol}). Thus we have an expression for $t$ whenever $t'(0)<0$. \\
Now we are interested to find exactly those $(s_1,s_2)\in D$ for which  $t'(0)<0$. For this purpose, in a first step, we find a condition that guarantees us the inequality   $t'(0)<0$.\\
Remember that $t(s_1,s_2,0)=t_{s_1,s_2}(0)=t(0)$ is defined by the relation 
\[t^{p}(0)-\frac{p}{p-q}\,t^{p-q}(0)=s_1-\frac{p}{p-q}\,\frac{s_1}{s_2}=:h(s_1,s_2)\]
for each $(s_1,s_2)\in D$, where $t(0)\geq 1$\,.\\
Remember that the function $\varphi:(0,+\infty)\longrightarrow\mb{R}$, defined by $\varphi(y)=y^p-\frac{p}{p-q}\,y^{p-q}$\,, is strictly decreasing on $(0,1)$,\; is strictly increasing on $(1,+\infty)$ and attain it's minimum value at $y_0=1$, which is $\varphi(1)=-\frac{q}{p-q}$\,.

\pagebreak

\begin{minipage}[h]{0.5\textwidth}
\centering
\begin{tikzpicture}[line cap=round,line join=round,>=stealth,x=2cm,y=1.25cm,scale=0.65]
\draw[->,color=black,line width=0.7pt] (-1,0.) -- (4,0.);
\foreach \x in {2,2.75,3.1}
\draw[shift={(\x,0)},color=black] (0pt,-2.5pt) -- (0pt,2.5pt)
node[below] at (0,-1pt) {};	
\draw[color=black] (4,0) node[above]  {{\small{$y$}}};
	\draw[->,color=black,line width=0.89pt] (0,-2.5) -- (0.,3.);
	\foreach \y in {-2,-1.5}
	\draw[shift={(0,\y)},color=black] (-2pt,0pt) -- (2pt,0pt) node[left] at (-1pt,0pt) {};
	
	\draw[shift={(0.,0.)},line width=1.25pt,color=black] plot[smooth,samples=200,domain=0.0:4.,variable=\t] ({\t},{0.15*(\t-2)^3+0.8*(\t-2)^2-2});
\draw[-,color=black,line width=0.5pt,dashed] (0,-2) -- (2,-2) -- (2,0);
\draw[-,color=black,line width=0.5pt,dashed] (0,-1.5) -- (2.75,-1.5) -- (2.75,0);
	\draw [fill=black] (2,-2) circle (2.25pt);
	\draw [fill=black] (2.75,-1.5) circle (2.25pt);
	\draw [fill=black] (3.41,0) circle (2.1pt);
	\begin{small}
		\draw[color=black] (-0.0,-1.4) node[left] {$t^{(1)}_1(s_1)$};
		\draw[color=black] (-0.0,-2.1) node[left] {$-\frac{q}{p-q}$};
		\draw[color=black] (-0.15,0) node[below] {$0$};
		\draw[color=black] (3.55,0) node[below]  {$y_0$};
		\draw[color=black] (2,0) node[above]  {$1$};
		\draw[color=black] (2.75,0) node[above]  {$\omega_p(s_1)$};
		\draw[color=black] (3.05,0) node[below]  {$\frac{p}{p-1}$};
		\draw[color=black] (3.7,2.2) node[above]  {$\varphi(y)$};
	\end{small}
\end{tikzpicture}
\end{minipage}
\hspace{1.45cm}\begin{minipage}[h]{0.33\textwidth}
If $y_0>1$ is such that $\varphi(y_0)=0$ then we easily see that $y_0$ should satisfy:\\
$y_0=\big({\frac{p}{p-q}}\big)^{1/q}>\frac{p}{p-1}>1$.
\end{minipage}\vspace{0.3cm}\\
Also the function $h:D\longrightarrow\mb{R}$, defined by $h(s_1,s_2)=s_1-\frac{p}{p-q}\,\frac{s_1}{s_2}$ satisfies $h(s_1,s_2)<0$,\,$\forall\,(s_1,s_2)\in D$, while we also have $h(s_1,s_2)>-\frac{q}{p-q}$,\,$\forall\,(s_1,s_2)\in D$. This is true since $h(s_1,s_2)\geq s_1-\frac{p}{p-q}\,s_1^{\frac{p-q}{p-1}}=:g(s_1)$\,, where $g'(s_1)=1-\frac{p}{p-1}\,\frac{1}{s_1^{\frac{q-1}{p-1}}}<0$\,,\,$\forall\,s_1\in (0,1)$. Thus $g(s_1)>g(1)=-\frac{q}{p-q}$\,,\,$\forall\,s_1\in (0,1)$.\\
Then since $s_1\in (0,1)$, and $\varphi\big|_{[1,+\infty)}$ is strictly increasing, and also $1<\omega_p(s_1)<\frac{p}{p-q}<y_0=\big({\frac{p}{p-q}}\big)^{1/q}$, we obtain that there exists (for every $s_1\in (0,1)$) a real number $t^{(1)}_1(s_1)\in\Big({-\frac{q}{p-q},0}\Big)$ for which 
\[\varphi\big({\omega_p(s_1)}\big)=t^{(1)}_1(s_1)\quad\Leftrightarrow\quad \omega_p(s_1)=\varphi^{-1}\big({t^{(1)}_1(s_1)}\big)\,.\]
If now $(s_1,s_2)\in D$ is such that $h(s_1,s_2)>t^{(1)}_1(s_1)$ then we get 
\begin{align}\label{eq:2p1}
\varphi\big({t(0)}\big)>\varphi\big({\omega_p(s_1)}\big)\quad\Rightarrow\quad t(0)>\omega_p(s_1)\,.
\end{align}
Condition (\ref{eq:2p1}) is the one we search for, as a first step, in order to find those $(s_1,s_2)\in D$ for which $t'(0)<0$. Note that if $t(0)>\omega_p(s_1)$ then $t'(0)<0$ (the converse is not true as we shall see further below in this section). For the proof of the above claim, note that if we denote by $\beta_0=\omega_p(s_1)-1$, then $\beta_0\in\big({0,\frac{1}{p-1}}\big)$ and $t(\beta_0)\leq t_1(\beta_0)=\omega_p(s_1)$ (see Section 3 in \cite{Nikol}).\\
Thus we get $t(\beta_0)\leq \omega_p(s_1)<t(0)\;\;\Rightarrow\;\; \exists\,\beta_1\in\big({0,\frac{1}{p-1}}\big)$, for which $t'(\beta_1)=0$, \, $t(\beta_1)<t(0)$ (see \cite{Nikol}). Thus we must have that $t'(0)<0$, otherwise as we show in \cite{Nikol} we should have that 
\[t(0)=\min\Big\{{t(\gamma)\;:\;\gamma\in\big[0,\tfrac{1}{p-1}\big]}\Big\}\,.\]
We have just proved that the condition $\omega_p(s_1)<t(0)$ implies $t'(0)<0$. Fix now $s_1\in (0,1)$, and denote by $h_{s_1}$ the function  
\[h_{s_1}(s_2)=h(s_1,s_2)=s_1-\frac{p}{p-q}\,\frac{s_1}{s_2}\,,\]
where $s_2\in \Big[{s_1^{\frac{q-1}{p-1}},1}\Big]$. Then $h_{s_1}$ is strictly increasing function of $s_2$, and 
\[h_{s_1}\Big({\Big[{s_1^{\frac{q-1}{p-1}},1}\Big]}\Big)=\Big[{g(s_1):=s_1-\tfrac{p}{p-q}\,s_1^{\frac{p-q}{p-1}},-\tfrac{q}{p-q}\,s_1}\Big]\,.\]
Thus if  $\varphi\big({\omega_p(s_1)}\big):=t^{(1)}_1(s_1)<g(s_1)$ we should have that \[\varphi\big({\omega_p(s_1)}\big)<g(s_1)\leq h(s_1,s_2)=\varphi\big({t(0)}\big)\quad\Rightarrow\quad  \omega_p(s_1)<t(0)\,,\]
which is condition (\ref{eq:2p1}). Now fix $s_1\in (0,1)$. We search for those $s_1$'s for which 
\begin{align}\label{eq:2p2}
\varphi\big({\omega_p(s_1)}\big)<g(s_1)
\end{align} is true.
\begin{center}
\begin{tikzpicture}[line cap=round,line join=round,>=stealth,x=2cm,y=1.25cm,scale=0.8]
	\draw[->,color=black,line width=0.7pt] (-1,0.) -- (4,0.);
	\foreach \x in {2}
	\draw[shift={(\x,0)},color=black] (0pt,-2.5pt) -- (0pt,2.5pt)
	node[below] at (0,-1pt) {};	
	\draw[color=black] (4,0) node[above]  {{\small{$y$}}};
	\draw[->,color=black,line width=0.89pt] (0,-2.5) -- (0.,3.);
	\foreach \y in {-2,-1.5,-1.15,-0.65}
	\draw[shift={(0,\y)},color=black] (-2pt,0pt) -- (2pt,0pt) node[left] at (-1pt,0pt) {};
	
	\draw[shift={(0.,0.)},line width=1.25pt,color=black] plot[smooth,samples=200,domain=0.0:4.,variable=\t] ({\t},{0.15*(\t-2)^3+0.8*(\t-2)^2-2});
	\draw[-,color=black,line width=0.5pt,dashed] (0,-2) -- (2,-2) -- (2,0);
	\draw [fill=black] (2,-2) circle (2.25pt);
	\draw [fill=black] (3.41,0) circle (2.1pt);
	\begin{small}
		\draw[color=black] (-0.0,-1.15) node[right] {$t^{(1)}_1(s_1)$};
		\draw[color=black] (-0.0,-2.1) node[left] {$-\frac{q}{p-q}$};
		\draw[color=black] (-0.15,0) node[below] {$0$};
		\draw[color=black] (3.55,0) node[below]  {$y_0$};
		\draw[color=black] (2,0) node[above]  {$1$};
		\draw[color=black] (0,-0.65) node[left]  {$\big({-\frac{q}{p-q}}\big)s_1$};
		\draw[color=black] (0,-1.5) node[left]  {$g(s_1)$};
		\draw[color=black] (3.7,2.2) node[above]  {$\varphi(y)$};
	\end{small}
\end{tikzpicture}
\end{center}
(\ref{eq:2p2}) is equivalent to 
\begin{align*}
&{\omega_p(s_1)}^{p}-\frac{p}{p-q}\,{\omega_p(s_1)}^{p-q}<s_1-\frac{p}{p-q}\,s_1^{\frac{p-q}{p-1}}\qquad\Leftrightarrow\\
\vartheta(s_1):=\,&{\omega_p(s_1)}^{p}-\frac{p}{p-q}\,{\omega_p(s_1)}^{p-q}-s_1+\frac{p}{p-q}\,s_1^{\frac{p-q}{p-1}}<0\,.
\end{align*}
Note that $\vartheta(1)=0$, while 
\begin{align}\label{eq:2p3}
\vartheta'(s_1)&=p\,{\omega_p(s_1)}^{p-1}\omega'_p(s_1)-p\,{\omega_p(s_1)}^{p-q-1}\omega'_p(s_1)-1+\frac{p}{p-1}\,s_1^{-\frac{q-1}{p-1}}=\nonumber\\
&=p\,{\omega_p(s_1)}^{p-q-1}\omega'_p(s_1)\big({{\omega_p(s_1)}^{q}-1}\big)+\frac{p}{p-1}\,s_1^{-\frac{q-1}{p-1}}-1\qquad\Rightarrow\nonumber\\
\vartheta'(s_1)&=\Delta(s_1)+\frac{p}{p-1}\,s_1^{-\frac{q-1}{p-1}}-1\,,
\end{align}
where $\Delta(s_1)$ is defined by (\ref{eq:2p3}). Then 
\begin{align*}
\Delta(s_1)&=p\,\frac{1}{p(p-1)\,{\omega_p(s_1)}^{p-2}\big({1-{\omega_p(s_1)}}\big)}\cdot{\omega_p(s_1)}^{p-q-1}\big({{\omega_p(s_1)}^{q}-1}\big)=\\
 &=-\frac{1}{p-1}\,{\omega_p(s_1)}^{-q+1}\cdot\frac{{\omega_p(s_1)}^{q}-1}{\omega_p(s_1)-1}\,.
\end{align*}

\pagebreak

\noindent Then, 
\begin{align*}
\Delta(1^{-})=-\frac{1}{p-1}\lim\limits_{t\to1^{+}}\frac{t^q-1}{t-1}=-\frac{q}{p-1}\,,
\end{align*}
thus
\begin{align*}
\vartheta'(1^{-})=\Delta(1^{-})+\frac{p}{p-1}-1=\frac{q}{p-1}+\frac{p}{p-1}-1=\frac{p-q}{p-1}-1<0\,.
\end{align*}
Until now we have proved that $\vartheta'(1^{-})<0$\,. Thus, since $\vartheta(1)=0$, (\ref{eq:2p2}) cannot be true if we let $s_1$ arbitrarily close to $1^{-}$. Also 
\begin{align*}
\vartheta(0^{+})=\vartheta(0)=\bigg({\frac{p}{p-1}}\bigg)^{p-q}\bigg[{\bigg({\frac{p}{p-1}}\bigg)^{q}-\frac{p}{p-q}}\bigg]<0\,.
\end{align*}
Thus (\ref{eq:2p2}) holds true if we let $s_1\to 0^{+}$.

We continue to search for which $s_1\in (0,1)$ we have that inequality  (\ref{eq:2p2}) : that is $\varphi\big({\omega_p(s_1)}\big)<g(s_1)$ is true. We prove that the function $\vartheta(s_1)$ is strictly concave on $s_1\in (0,1)$.\vspace{0.2cm}\\
\begin{minipage}[h]{0.45\textwidth}
	\centering
	\begin{tikzpicture}[line cap=round,line join=round,>=stealth,x=1.5cm,y=1.5cm,scale=0.75]
		\draw[->,color=black,line width=0.6pt] (-0.5,0.) -- (3.5,0.);
		\foreach \x in {}
		\draw[shift={(\x,0)},color=black] (0pt,-2.5pt) -- (0pt,2.5pt)
		node[below] at (0,-1pt) {};	
		\draw[->,color=black,line width=0.6pt] (0,-2.) -- (0.,1.25);
		\foreach \y in {}
		\draw[shift={(0,\y)},color=black] (-2pt,0pt) -- (2pt,0pt) node[left] at (-1pt,0pt) {};
		\draw[shift={(0.,0.)},line width=1.25pt,color=black] plot[smooth,samples=200,domain=0.0:2.75,variable=\t] ({\t},{-0.15*(\t-2)^3-0.8*(\t-2)^2+0.5});
		\draw [fill=black] (1.15,0) circle (2.pt);
		\draw [fill=black] (2.72,0) circle (2.pt);
		\begin{small}
			\draw[color=black] (1.15,0) node[below] {$\delta$};
			\draw[color=black] (2.2,0.5) node[above] {$\vartheta(s_1)$};
			\draw[color=black] (-0.15,0) node[below] {$0$};
			\draw[color=black] (3.5,0) node[below]  {$s_1$};
			\draw[color=black] (2.72,0) node[below]  {$1$};
			\end{small}
	\end{tikzpicture}
\end{minipage}
\hspace{0.55cm}\begin{minipage}[h]{0.5\textwidth}
If we prove this fact then by the above reasoning we should have that there exists $\delta\in (0.1)$ such that
\begin{enumerate}[label={\roman*)}]
\item
$\forall\,s_1\in(0,\delta):\; \vartheta(s_1)<0$\,.
\item
$\forall\,s_1\in(\delta,1): \vartheta(s_1)>0$, with  $\vartheta(\delta)=0$\,.
\end{enumerate}
\end{minipage}\vspace{0.3cm}\\
Thus when $s_1\in(0,\delta)$ we would have for each $s_2\in \Big[{s_1^{\frac{q-1}{p-1}},1}\Big)$ that $\omega_p(s_1)<t(0)$ as we noted above and thus  $t'(0)<0$. We have (by (\ref{eq:2p3})):
\begin{align}\label{eq:2p4}
	\vartheta'(s_1)&=\Delta(s_1)+\frac{p}{p-1}\,s_1^{-\frac{q-1}{p-1}}-1=\nonumber\\
	&=-\frac{1}{p-1}\,\frac{\omega_p(s_1)-{\omega_p(s_1)}^{-q+1}}{\omega_p(s_1)-1}+\frac{p}{p-1}\,s_1^{-\frac{q-1}{p-1}}-1\qquad\Rightarrow\nonumber\\
	\vartheta'(s_1)&=-\frac{1}{p-1}\,\lambda\big({\omega_p(s_1)}\big)+\frac{p}{p-1}\,s_1^{-\frac{q-1}{p-1}}-1\,,
\end{align}
where $\lambda(x)=\dfrac{x-x^{-q+1}}{x-1}\,,\; x>1$\,.\\
Then we calculate  the sign of $\lambda'(x)$, as follows:
\begin{align}\label{eq:2p5}
\lambda'(x)&\cong\big({1-({-q+1})x^{-q}}\big)(x-1)-(x-x^{-q+1})=\nonumber\\
           &=x-1+(q-1)x^{-q+1}-(q-1)x^{-q}-x+x^{-q+1}=\nonumber\\
           &=qx^{-q+1}-(q-1)x^{-q}-1=:\epsilon(x)\hspace{2.5cm}\Rightarrow\nonumber\\           
\end{align}
Then $\epsilon'(x)<0$\,,\, $\forall\,x>1$, thus $\epsilon(x)<\epsilon(1)=0$\,,\, $\forall\,x>1$. 

\pagebreak

\noindent\hspace*{1.275cm} Thus $\lambda'(x)<0$\,,\, $\forall\,x>1$, \\
$=\!=\!\Longrightarrow\quad\lambda(x)$\; is strictly decreasing on $x\in(1,+\infty)$\\
$=\!=\!\Longrightarrow\quad\lambda\big({\omega_p(s_1)}\big)$\; is strictly increasing of  $s_1\in (0,1)$\\
$\stackrel{\text{by}\,(\ref{eq:2p4})}{=\!=\!\Longrightarrow}\quad\vartheta'(s_1)$\; is strictly decreasing\\
$=\!=\!\Longrightarrow\quad\vartheta(s_1)$ is strictly concave on the interval: $s_1\in (0,1)$.\\
This $\delta$ (as described before) is determined by the equation
\begin{align}\label{eq:2p6}
\vartheta(\delta)&=0 &\Leftrightarrow\nonumber\\
\omega_p(\delta)^{p}-\frac{p}{p-q}\,\omega_p(\delta)^{p-q}&=\delta-\frac{p}{p-q}\,\delta^{\frac{p-q}{p-1}}\,,
\end{align}
with $s_1\in (0,1)$.\\
Note that $\delta=\delta(p,q)\in(0,1)$ depends only on $p,q$. We remind again now that if $s_1\in (0,\delta)$ then $\vartheta(s_1)<0\;\Rightarrow\; \omega_p(s_1)<t_{s_1,s_2}(0)$\,,\,$\forall\,s_2\in \Big[{s_1^{\frac{q-1}{p-1}},1}\Big)$, thus $t'_{s_1,s_2}(0)<0$, for every such $s_1$ and $s_2$.\\
Moreover we need to study the case where $\delta\leq s_1<1$. If $s_1\in (\delta,1)$ then $\varphi\big({\omega_p(s_1)}\big)=t^{(1)}_1(s_1)>g(s_1)$ because $s_1\in (\delta,1)$ implies $\vartheta(s_1)>0$ and thus $\varphi\big({\omega_p(s_1)}\big)>g(s_1)$. Note now that for each $s_1\in (\delta,1)$
\begin{align}\label{eq:2p7}
    g(s_1)<t^{(1)}_1(s_1)<-\frac{q}{p-q}\,s_1\,.
\end{align}
We just need to prove the second inequality in (\ref{eq:2p7}). We prove this, by using the following reasoning:
\begin{align} \label{eq:2p8}
    &t_1^{(1)} < - \frac{q}{p-q} s_1 \Leftrightarrow \phi(\omega_{p}(s_1)) < - \frac{q}{p-q} s_1 \Leftrightarrow \nonumber \\
    &G(s):= \omega_p(s_1)^p - \frac{p}{p-q} \omega_p(s_1)^{p-q} + \frac{q}{p-q} s_1 < 0 , \forall s_1 \in (\delta,1) .
\end{align}
As before, we calculate
\[
    G'(s_1) = \Delta(s_1) + \frac{q}{p-q}, \quad s_1 \in (\delta,1),
\]
where $\Delta(s_1)$ is given by \eqref{eq:2p3}. Moreover
\begin{align*} 
    G'(1^-) &= \Delta(1^-) + \frac{q}{p-q} = -\frac{q}{p-1} + \frac{q}{p-q} \\
    &= q \left ( \frac{1}{p-q} - \frac{1}{p-1} \right ) = q  \left ( \frac{p-1 - (p-q)}{(p-1)(p-q)} \right )  \\
    &= \frac{q(q-1)}{(p-1)(p-q)} > 0,
\end{align*}
while $\Delta(s_1)$ is strictly decreasing on $s_1 \in (0,1)$, thus $G$ is strictly concave on $s_1 \in (0,1)$ with $G(1)=0$ and $G'(1^-)>0.$ These observations imply that $G(s_1)<0 , \forall s_1 \in (\delta,1)$, thus \eqref{eq:2p8} holds true.

\pagebreak

\noindent Thus $t^{1}({s_1})$ lives on the interval $ \left (g(s_1), -\frac{q}{p-q}s_1 \right )$, thus also in the range \\ $h_{s_1} \left ( s_1^{(q-1)/(p-1)},1 \right )$ and by the strict monotonicity of $h_{s_1}$, we get that there exists unique
\[
s_2' = s_2'(s_1) \in  \left ( s_1^{(q-1)/(p-1)},1 \right ),
\]
for which:
\[
    h_{s_1}(s_2') = h(s_1, s_2') = t_1^{(1)}(s_1) .
\]
Now we consider two cases
\begin{itemize}
    \item \( a ) \) $s_2 \in (s_2'(s_1),1), $ that is $s_2'<s_2<1.$ Then:
    $$ \phi(\omega_p(s_1)) = t_1^{(1)}(s_1) = h(s_1,s_2') \overset{s_2'<s_2}{<} h(s_1,s_2) = \phi(t(0)) ,$$
    which implies $\omega_p(s_1) < t(0),$ that is condition \eqref{eq:2p1}. 

    \item \( b ) \) $s_1^{(q-1)/(p-1)} < s_2 \leq s_2' = s_2'(s_1). $ Then:
    \begin{align*}
        g(s_1) &= h(s_1, s_1^{(q-1)/(p-1)}) = h_{s_1}(s_1^{(q-1)/(p-1)} ) < h_{s_1}(s_2) \\
    &\leq h(s_1,s_2') = t_1^{(1)}(s_1).
    \end{align*}

    Also in this case, $s_2 \leq s_2' \Rightarrow$
    \begin{align*}
        &\phi(t(0)) = h(s_1,s_2) \leq h(s_1, s_2') = t_1^{(1)}(s_1) = \phi(\omega_p(s_1)) \\
        &\Rightarrow t(0) \leq \omega_p(s_1),
    \end{align*}
    that is, we get the opposite to \eqref{eq:2p1} inequality.
    \end{itemize}
    Note also that since $s_2'(s_1) \in (s_1^{(q-1)/(p-1)},1)$ we have that
    \[
    \lim \limits_{s_1 \to 1^-} s_2'(s_1) = 1
    \]
    Also, since $h_{s_1}(s_2') = t_1^{(1)}(s_1) = \phi(\omega_p(s_1)),$ we have that
    \begin{align*}
        &s_1 - \frac{p}{p-q} \frac{s_1}{s_2'} = \phi(\omega_p(s_1)) < 0 \Rightarrow \\
        &\frac{p}{p-q} \frac{s_1}{s_2'} = s_1 - \phi(\omega_p(s_1)) > 0 \Rightarrow \\
        &s_2' = s_2'(s_1) = \frac{p}{p-q} \frac{s_1}{ s_1 - \phi(\omega_p(s_1))},
    \end{align*}
    that is, we provided the dependence of $s_2'$ by $s_1.$

\pagebreak

\noindent Also, $t_{s_1,s_2'}(0) = \omega_p(s_1). $ Indeed:
\[
\phi(t_{s_1,s_2'}(0)) = h(s_1, s_2') = t_1^{(1)}(s_1) = \phi(\omega_p(s_1))
\]
and the equality above is implied. 

\vspace{5mm}

\noindent Moreover $t_{s_1,s_2'}'(0) \leq 0,$ since otherwise we would have $t_{s_1,s_2'}'(0) > 0,$ and since for $\beta_0 = \omega_p(s_1)-1:$
\[
t_{s_1,s_2'}(\beta_0) \leq t_1(\beta_0) = \omega_p(s_1) = t_{s_1,s_2'}(0),
\]
we get by the Remarks in section 4 of \cite{Nikol} a contradiction.
\begin{center}
\begin{tikzpicture}[line cap=round,line join=round,>=stealth,x=2.0cm,y=1.0cm,scale=1.25]
	\draw[->,color=black,line width=0.9pt] (-0.25,0.) -- (2.5,0.);
	\foreach \x in {}
	\draw[shift={(\x,0)},color=black] (0pt,-2.5pt) -- (0pt,2.5pt)
	node[below] at (0,-1pt) {};	
	\draw[->,color=black,line width=0.8pt] (0,-0.5) -- (0.,2.5);
	\foreach \y in {}
	\draw[shift={(0,\y)},color=black] (-2pt,0pt) -- (2pt,0pt) node[left] at (-1pt,0pt) {};
\draw[shift={(0.,0.)},line width=1.3pt,color=black] plot[smooth,samples=200,domain=0.0:2.1,variable=\t] ({\t},{-0.15*(1.12*\t-4)^2+2.4});
	\draw [fill=black,dashed] (0,2) -- (2.1,2);
	\draw [fill=black,dashed] (2.1,0) -- (2.1,2);
	\draw [fill=black,dashed] (0.8,0) -- (0.8,0.9);
	\draw [->,fill=black,line width=0.5pt] (1.5,1.6) -- (1.55,1.4);
	\draw [->,fill=black,line width=0.5pt] (1.35,1.55) -- (1.2,1.7);
	\begin{small}
		\draw[color=black] (0.8,0) node[below] {$\delta$};
		\draw[color=black] (1.24,1.75) node[left] {$s'_2(s_1)$};
		\draw[color=black] (1.75,1.45) node[below] {$s_1^{q-1}=s_2^{p-1}$};
		\draw[color=black] (-0.1,0) node[below] {$0$};
		\draw[color=black] (2.5,0) node[below]  {$s_1$};
		\draw[color=black] (2.1,0) node[below]  {$1$};
		\draw[color=black] (0,2) node[left]  {$1$};
		\draw [line width=1.3pt ] plot [smooth,samples=200] coordinates {(0.8,0.95) (0.9,1.3)(1.05,1.3) (1.15,1.45) (1.35,1.55)  (1.6,1.8) (1.8,1.88) (2.1,2)};
	\end{small}
\end{tikzpicture}
\end{center}
Finally,
\[
s_2'(\delta) = \frac{p}{p-q} \frac{\delta}{\delta - \phi(\omega_p(\delta))}.
\]
But $\delta$ is defined in a way that
\begin{align*}
\vartheta(\delta)&=0 \Leftrightarrow \nonumber \\
\phi(\omega_p(\delta)) &= \delta - \frac{p}{p-q} \delta^{(p-q)/(p-1)} , 
\end{align*}
because of \eqref{eq:2p6}. Thus
\begin{align*}
    &s_2'(\delta) = \frac{p}{p-q} \frac{\delta}{\frac{p}{p-q} \delta^{(p-q)/(p-1)}} = \delta^{(q-1)/(p-1)} \Rightarrow \\
    &(\delta, s_2'(\delta)) \in C_1,
\end{align*}
where $C_1$ is the curve $s_1^{q-1} = s_2^{p-1}$. We conclude the proof of the following: 
\begin{theorem} \label{thm:2p1}
    There exists $0 < \delta = \delta(p,q) < 1$ and a function
    $s_2':[\delta,1) \to [\delta^{(q-1)/(p-1)},1)$ for which $s_2'(s_1)>s_1^{(q-1)/(p-1)}, \forall s_1 \in (\delta,1)$ such that the following hold:
    \begin{enumerate}
        \item $\forall s_1 \in (0,\delta), \forall s_2 \in [s_1^{(q-1)/(p-1)},1),$ we have that $\omega_p(s_1) < t_{s_1,s_2}(0)$

        \item $\forall s_1 \in (\delta ,1), \forall s_2 \in (s_2'(s_1),1):$ $\omega_p(s_1) < t_{s_1,s_2}(0)$

        \item $\forall s_1 \in (\delta ,1), \forall s_2 \in [s_1^{(q-1)/(p-1)},s_2'(s_1)):$ $\omega_p(s_1) > t_{s_1,s_2}(0)$

        \item $t_{s_1,s_2'(s_1)}(0) = \omega_p(s_1), \forall s_1 \in [\delta,1).$
    \end{enumerate}
\end{theorem}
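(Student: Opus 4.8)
The plan is to deduce all four assertions from the sign behaviour of a single auxiliary function on $(0,1)$, namely
\[
\vartheta(s_1)=\varphi\bigl(\omega_p(s_1)\bigr)-g(s_1),\qquad g(s_1)=s_1-\frac{p}{p-q}\,s_1^{(p-q)/(p-1)}=h\bigl(s_1,s_1^{(q-1)/(p-1)}\bigr),
\]
where $\varphi(y)=y^p-\frac{p}{p-q}y^{p-q}$. The structural fact being exploited is that, by the defining relation of $t(0)$ and the strict monotonicity of $\varphi$ on $[1,+\infty)$, the inequality $\omega_p(s_1)<t_{s_1,s_2}(0)$ is equivalent to $\varphi(\omega_p(s_1))<h(s_1,s_2)$; and since $h_{s_1}$ is strictly increasing with range exactly $\bigl(g(s_1),-\frac{q}{p-q}s_1\bigr)$ as $s_2$ runs over $[s_1^{(q-1)/(p-1)},1)$, everything reduces to locating the number $t_1^{(1)}(s_1)=\varphi(\omega_p(s_1))$ relative to that range.

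First I would prove that $\vartheta$ is strictly concave on $(0,1)$. Differentiating and using $\omega_p'=(H_p^{-1})'$, one brings $\vartheta'$ to the form $\vartheta'(s_1)=-\frac{1}{p-1}\lambda\bigl(\omega_p(s_1)\bigr)+\frac{p}{p-1}s_1^{-(q-1)/(p-1)}-1$ with $\lambda(x)=\frac{x-x^{1-q}}{x-1}$. Then $\lambda'(x)$ has the sign of $\epsilon(x)=qx^{1-q}-(q-1)x^{-q}-1$, which satisfies $\epsilon(1)=0$ and $\epsilon'(x)<0$, so $\epsilon<0$ and $\lambda$ is strictly decreasing on $(1,+\infty)$; since $\omega_p$ is strictly decreasing, $\lambda(\omega_p(s_1))$ is strictly increasing in $s_1$, hence $\vartheta'$ is strictly decreasing. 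Combined with the boundary data $\vartheta(1)=0$, $\vartheta'(1^-)=\frac{p-q}{p-1}-1<0$ and $\vartheta(0^+)=\bigl(\frac{p}{p-1}\bigr)^{p-q}\bigl[\bigl(\frac{p}{p-1}\bigr)^q-\frac{p}{p-q}\bigr]<0$, strict concavity forces the existence of a unique $\delta=\delta(p,q)\in(0,1)$ with $\vartheta<0$ on $(0,\delta)$, $\vartheta(\delta)=0$, and $\vartheta>0$ on $(\delta,1)$. Assertion (1) is then immediate: for $s_1\in(0,\delta)$ and any admissible $s_2$, $\varphi(\omega_p(s_1))<g(s_1)\le h(s_1,s_2)=\varphi(t_{s_1,s_2}(0))$, hence $\omega_p(s_1)<t_{s_1,s_2}(0)$.

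For $s_1\in(\delta,1)$ I must show $t_1^{(1)}(s_1)$ lies inside the range $\bigl(g(s_1),-\frac{q}{p-q}s_1\bigr)$ of $h_{s_1}$. The lower bound is exactly $\vartheta(s_1)>0$; the upper bound $\varphi(\omega_p(s_1))<-\frac{q}{p-q}s_1$ I would get from the companion $G(s_1)=\varphi(\omega_p(s_1))+\frac{q}{p-q}s_1$ by the same method — $G(1)=0$, $G'(1^-)=\frac{q(q-1)}{(p-1)(p-q)}>0$, and $G''<0$ via the same monotonicity of the $\Delta$-term, so $G<0$ on $(\delta,1)$. Hence there is a unique $s_2'(s_1)\in(s_1^{(q-1)/(p-1)},1)$ with $h_{s_1}(s_2'(s_1))=\varphi(\omega_p(s_1))$, and solving the relation linear in $1/s_2'$ gives $s_2'(s_1)=\frac{p}{p-q}\cdot\frac{s_1}{s_1-\varphi(\omega_p(s_1))}$. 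Strict monotonicity of $h_{s_1}$ now splits the argument: if $s_2>s_2'(s_1)$ then $\varphi(t_{s_1,s_2}(0))=h(s_1,s_2)>h(s_1,s_2')=\varphi(\omega_p(s_1))$, giving $\omega_p(s_1)<t_{s_1,s_2}(0)$, i.e. (2); if $s_1^{(q-1)/(p-1)}<s_2\le s_2'(s_1)$ the inequality reverses, giving (3). Assertion (4) follows from $\varphi(t_{s_1,s_2'}(0))=h(s_1,s_2')=\varphi(\omega_p(s_1))$ together with injectivity of $\varphi$ on $[1,+\infty)$. Finally, substituting $\vartheta(\delta)=0$, i.e. $\varphi(\omega_p(\delta))=\delta-\frac{p}{p-q}\delta^{(p-q)/(p-1)}$, into the formula for $s_2'$ yields $s_2'(\delta)=\delta^{(q-1)/(p-1)}$, so $s_2'$ extends to $[\delta,1)$ landing on the curve $C_1$ at $s_1=\delta$; the strict inequality $s_2'(s_1)>s_1^{(q-1)/(p-1)}$ on $(\delta,1)$ is $t_1^{(1)}(s_1)>g(s_1)$, and $\lim_{s_1\to1^-}s_2'(s_1)=1$ because $s_2'(s_1)$ is squeezed between $s_1^{(q-1)/(p-1)}$ and $1$.

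The main obstacle is the two strict-concavity statements, for $\vartheta$ and for $G$: they are what make the whole geometric picture rigid, and each rests on the full chain of auxiliary sign computations ($\epsilon\Rightarrow\lambda'<0\Rightarrow\lambda$ decreasing, then composition with the decreasing map $\omega_p$ to get $\vartheta'$ decreasing), together with the delicate one-sided limits such as $\Delta(1^-)=-\frac{q}{p-1}$. Once concavity and the three boundary values are secured, the four conclusions are just bookkeeping with the strictly monotone maps $\varphi$ and $h_{s_1}$.
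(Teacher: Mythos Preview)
Your proposal is correct and follows essentially the same route as the paper: the same auxiliary function $\vartheta(s_1)=\varphi(\omega_p(s_1))-g(s_1)$, the same strict-concavity proof via $\lambda(x)=\dfrac{x-x^{1-q}}{x-1}$ and $\epsilon(x)$, the same boundary values $\vartheta(1)=0$, $\vartheta'(1^-)<0$, $\vartheta(0^+)<0$ producing the unique $\delta$, the same companion function $G(s_1)=\varphi(\omega_p(s_1))+\frac{q}{p-q}s_1$ for the upper bound on $t_1^{(1)}(s_1)$, the explicit formula $s_2'(s_1)=\frac{p}{p-q}\,\frac{s_1}{s_1-\varphi(\omega_p(s_1))}$, and the final check $s_2'(\delta)=\delta^{(q-1)/(p-1)}$. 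The only cosmetic slip is in case~(3), where you wrote $s_2\le s_2'(s_1)$; for the \emph{strict} inequality $\omega_p(s_1)>t_{s_1,s_2}(0)$ in the theorem you need $s_2<s_2'(s_1)$ strictly (the equality case is exactly assertion~(4)).
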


\pagebreak

\noindent Now we search for explicit characteristic conditions on $(s_1,s_2) \in D$ under which we have $t'_{s_1,s_2}(0) \leq 0$ or $t'_{s_1,s_2}(0) > 0.$ \\

\noindent As we shall see, there are always elements $(s_1,s_2) \in D$ for which $t_{s_1,s_2}'(0)>0.$ Recall that:
\[
D=\left \{(s_1,s_2)\in\mb{R}^2\;:\; 0<s_1^{q-1}\leq s_2^{p-1}<1 \right \}
\]
By relation (4.5) of \cite{Nikol} we have that
\begin{align*} \label{eq:2p9}
    t_{s_1,s_2}'(0) &\leq 0 \Leftrightarrow \\
    pt^{p-q}(0) q (q-1) \frac{(-1)}{G^2(0)} &- p(q-1)(p-q) \frac{s_1}{G^2(0)} \\
    +\frac{s_1}{s_2} \bigg [ p(q-1) \frac{p-q}{G^2(0)} \omega_q(s_2)^q &- \frac{pq(q-1)(-1)}{G^2(0)} \bigg ] \leq 0 \Leftrightarrow \\
    -qt^{p-q}(0) - (p-q)s_1 + \frac{s_1}{s_2} &\left [ (p-q) \omega_q(s_2)^q + q \right ] \leq 0 \Leftrightarrow \\
    t^{p-q}(0) \geq \frac{p-q}{q} \frac{s_1}{s_2} \omega_q(s_2)^q &- \frac{p-q}{q} s_1 + \frac{s_1}{s_2} \Leftrightarrow \\
    t^{p-q}(0) \geq \frac{p-q}{q} s_1 \biggl ( \frac{\omega_q(s_2)^q}{s_2} &- 1 \biggr ) + \frac{s_1}{s_2} \Leftrightarrow \\
    t^{p-q}(0) \geq \frac{p-q}{q} s_1 a(s_2) &+ \frac{s_1}{s_2}, \numberthis
\end{align*}
where
\[
a(s_2) = \frac{\omega_q(s_2)^q}{s_2} - 1 \quad \text{and} \quad t(0) = t_{s_1,s_2}(0).
\]
Analogously, $$t_{s_1,s_2}'(0) > 0 \Leftrightarrow t^{p-q}(0) < \frac{p-q}{q} s_1 a(s_2) + \frac{s_1}{s_2}.$$
Remember that $t(0)$ satisfies:
\[
\phi(t(0)) = s_1 - \frac{p}{p-q}\frac{s_1}{s_2} := h(s_1,s_2), \quad t(0)>1,
\]
where $\phi:[1,\infty) \to \mathbb{R}$ is the strictly increasing function:
\[
\phi(y) = y^p - \frac{p}{p-q} y^{p-q} .
\]
Suppose now that there exists $(s_1,s_2) \in D$ for which $t_{s_1,s_2}'(0) > 0.$ Then by the comments above:
\begin{align*}
    &1 < t^{p-q}(0) < \frac{p-q}{q} s_1 a(s_2) + \frac{s_1}{s_2} =: E(s_1,s_2) \Rightarrow \\
    &1< t(0) < E(s_1,s_2)^{1/(p-q)} \Rightarrow \phi(t(0)) < \phi(E(s_1,s_2)^{1/(p-q)}) \Rightarrow
\end{align*}
\begin{align*} \label{eq:2p10}
    &s_1 - \frac{p}{p-q} \frac{s_1}{s_2} < E(s_1,s_2)^{p/(p-q)} - \frac{p}{p-q} E(s_1,s_2) \Leftrightarrow \\
    &s_1 - \frac{p}{p-q} \frac{s_1}{s_2} < E(s_1,s_2)^{p/(p-q)} - \frac{p}{p-q} \left ( \frac{p-q}{q} s_1 a(s_2) + \frac{s_1}{s_2} \right ) \Rightarrow \\
    &s_1 + \frac{p}{q}s_1 a(s_2) < E(s_1,s_2)^{p/(p-q)} \Rightarrow \\
    &E(s_1,s_2)^p > \left ( \frac{p}{q} s_1 a(s_2) + s_1 \right )^{p-q} \Rightarrow \\
    &s_1^p \left ( \frac{p-q}{q} a(s_2) + \frac{1}{s_2} \right )^p > s_1^{p-q} \left ( \frac{p}{q} a(s_2) + 1 \right )^{p-q} \Rightarrow \\
    &h(s_2) := \frac{\left ( \frac{p-q}{q} a(s_2) + \frac{1}{s_2} \right )^p}{\left ( \frac{p}{q} a(s_2) + 1 \right )^{p-q}} > \frac{1}{s_1^q} . \numberthis
\end{align*}
Assume now that $(s_1,s_2) \in D$ satisfies $t_{s_1,s_2}'(0) = 0,$ then according to the previous reasoning we should have $h(s_2) = \frac{1}{s_1^q}$ and if $t_{s_1,s_2}'(0)<0$ then $h(s_2) < \frac{1}{s_1^q}.$ We calculate now the derivative of $a(s_2)$ with respect to $s_2$. We state it as a separate lemma
\begin{lemma} \label{lem:2p2}
    If $a(s_2) = \frac{\omega_q(s_2)^q}{s_2} - 1,$ \ $s_2 \in (0,1)$ then
    \[
    a'(s_2) = \frac{1}{s_2} \left [ - \frac{\omega_q(s_2)}{(q-1)(\omega_q(s_2)-1)} - \frac{\omega_q(s_2)^q}{s_2} \right ] .
    \]
\end{lemma}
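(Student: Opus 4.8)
The plan is to compute $a'(s_2)$ directly by the chain rule, using the formula for the derivative of $\omega_q=H_q^{-1}$ that has already been employed in the earlier lemmas. First I would set $w=\omega_q(s_2)$, so that $s_2=H_q(w)=qw^{q-1}-(q-1)w^q$; since $s_2\in(0,1)$ we have $w\in\big(1,\tfrac{q}{q-1}\big)$, so in particular $w-1>0$ and every expression below is well defined. Differentiating $s_2=H_q(w)$ with respect to $s_2$ gives $1=H_q'(w)\,w'$ with $H_q'(w)=q(q-1)w^{q-2}(1-w)$, whence
\[
w'=\omega_q'(s_2)=\frac{1}{q(q-1)\,w^{q-2}(1-w)}.
\]

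Next I would differentiate $a(s_2)=\dfrac{w^q}{s_2}-1$ by the quotient rule, obtaining $a'(s_2)=\dfrac{qw^{q-1}w'}{s_2}-\dfrac{w^q}{s_2^2}$. Substituting the expression for $w'$ into the first term collapses the powers of $w$:
\[
qw^{q-1}w'=\frac{qw^{q-1}}{q(q-1)w^{q-2}(1-w)}=\frac{w}{(q-1)(1-w)}=-\frac{w}{(q-1)(w-1)},
\]
so that $\dfrac{qw^{q-1}w'}{s_2}=\dfrac{1}{s_2}\Big(-\dfrac{w}{(q-1)(w-1)}\Big)$, while trivially $\dfrac{w^q}{s_2^2}=\dfrac{1}{s_2}\cdot\dfrac{w^q}{s_2}$. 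Adding the two pieces and writing back $w=\omega_q(s_2)$ yields exactly
\[
a'(s_2)=\frac{1}{s_2}\left[-\frac{\omega_q(s_2)}{(q-1)(\omega_q(s_2)-1)}-\frac{\omega_q(s_2)^q}{s_2}\right].
\]

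I do not expect any genuine obstacle here: this is essentially a one-line computation once the inverse-function derivative is in hand. The only point that needs a word of justification is that $\omega_q(s_2)>1$ throughout $(0,1)$ — this is what makes the denominators $w-1$ and $1-w$ nonzero and legitimizes the sign flip $\tfrac{1}{1-w}=-\tfrac{1}{w-1}$ — and it is immediate from the fact that $\omega_q$ is the branch of $H_q^{-1}$ with range $[1,\tfrac{q}{q-1})$, exactly as used in all the preceding lemmas. Everything else is the quotient rule combined with that derivative formula.
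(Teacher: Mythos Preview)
Your proof is correct and follows essentially the same approach as the paper: both apply the quotient rule to $\omega_q(s_2)^q/s_2$ and substitute the inverse-function derivative $\omega_q'(s_2)=1/\big(q(q-1)\omega_q(s_2)^{q-2}(1-\omega_q(s_2))\big)$ to simplify. Your version is slightly cleaner in that you introduce the abbreviation $w=\omega_q(s_2)$ and explicitly justify $w>1$, but the computation is line-for-line the same.
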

\begin{proof}
    \begin{align*}
        a'(s_2) &= \frac{q\omega_q(s_2)^{q-1} \omega_q'(s_2) s_2 - \omega_q(s_2)^q}{s_2^2} \\
        &= \frac{q}{s_2} \omega_q(s_2)^{q-1} \frac{1}{q(q-1)\omega_q(s_2)^{q-2}(1-\omega_q(s_2))} - \frac{\omega_q(s_2)^q}{s_2^2} \\
        &= - \frac{1}{s_2} \frac{1}{q-1} \frac{\omega_q(s_2)}{\omega_q(s_2) - 1} - \frac{\omega_q(s_2)^q}{s_2} \\
        &= \frac{1}{s_2} \left [ - \frac{\omega_q(s_2)}{(q-1)(\omega_q(s_2)-1)} - \frac{\omega_q(s_2)^q}{s_2} \right ] . \qedhere
    \end{align*}
\end{proof}
\vspace{5mm}
\noindent We study now the monotonicity properties of $h(s_2),$ which is defined in \eqref{eq:2p10}. We set $s=s_2$ and evaluate the sign of $h'(s)$ as follows:
\begin{align*}
    &h'(s) \cong p \left ( \frac{p-q}{q} a(s) + \frac{1}{s} \right )^{p-1} \left ( \frac{p-q}{q} a'(s_2) - \frac{1}{s^2} \right ) \left ( \frac{p}{q} a(s) + 1 \right )^{p-q} \\
    &- \left ( \frac{p-q}{q} a(s) + \frac{1}{s} \right )^p (p-q) \left ( \frac{p}{q} a(s) + 1 \right )^{p-q-1} \left ( \frac{p}{q} a'(s) \right )
\end{align*}

\pagebreak

\begin{align*}
    &\cong \left ( \frac{p-q}{q} a'(s) - \frac{1}{s^2} \right ) \left ( \frac{p}{q} a(s) + 1 \right ) - \frac{p-q}{q} a'(s) \left ( \frac{p-q}{q} a(s) + \frac{1}{s} \right ) \\
    &= \frac{p(p-q)}{q^2} a'(s) a(s) + \frac{p-q}{q} a'(s) - \frac{1}{s^2} \frac{p}{q} a(s) - \frac{1}{s^2} \\
    &- \frac{(p-q)^2}{q^2} a'(s) a(s) - \frac{p-q}{q} a'(s) \frac{1}{s} \\
    &= \frac{p-q}{q} a'(s) \left [ \frac{\omega_q(s)^q}{s} - \frac{1}{s} \right ] - \frac{p}{q} \frac{a(s)}{s^2} - \frac{1}{s^2} < 0 , \forall s \in (0,1) ,
\end{align*}

\vspace{5mm}

\noindent Thus $h(s)$ is strictly decreasing for $s \in (0,1)$, while $h(1)=1$ and $\lim \limits_{s \to 0^+}h(s) = +\infty$. This last fact can be proved easily by making the change of variables in $h(s)$, $t = \omega_q(s) \Leftrightarrow s = H_q(t)$ and then observing that $s \to 0^+ \Leftrightarrow t \to {\frac{q}{q-1}}^-$. We are omitting the simple details of this claim.

\vspace{5mm}

\noindent By the above mentioned comments, it is implied that for every $s_1 \in (0,1)$ there exists unique $s_2 \in (0,1)$ for which $h(s_2) = \frac{1}{s_1^q}$ . We denote this real number by
\begin{equation} \label{eq:2p11}
s_2''(s_1) = s_2, \ \text{that is} \ h(s_2''(s_1)) = \frac{1}{s_1^q}, \forall s_1 \in (0,1) .
\end{equation}
We prove now that there do exist $(s_1,s_2) \in D^o$ for which
\[
h(s_2) = \frac{1}{s_1^q} \Leftrightarrow s_2 = h^{-1} \left ( \frac{1}{s_1^q} \right ) =: h_0(s_1)
\]
Note that $h_0(s_1)$ is an increasing function of $s_1 \in (0,1)$ with $h_0(1^-) = 1.$ Assume on the contrary that the claim stated above is not satisfied. Then for every $(s_1,s_2)$ for which $h(s_2) = \frac{1}{s_1^q}$ we should have:
\begin{equation} \label{eq:2p12}
s_2 \leq s_1^{(q-1)/(p-1)} \Leftrightarrow h(s_2) \geq h(s_1^{(q-1)/(p-1)}) \Leftrightarrow h(s_1^{(q-1)/(p-1)}) \leq \frac{1}{s_1^q}
\end{equation}
But as we shall now see \eqref{eq:2p12} fails for $s_1 \to 1^-$. We make the change of variable $s_1^{(q-1)/(p-1)} = s$ in \eqref{eq:2p12}. Then if \eqref{eq:2p12} was true for $s_1 \to 1^-$, we should have that
\begin{equation} \label{eq:2p13}
    h(s) s^{q \frac{p-1}{q-1}} \leq 1 , \ \text{as} \ s \to 1^-.
\end{equation}
Define the left hand side of \eqref{eq:2p13} by
\[
\Psi(s) := h(s) s^{q \frac{p-1}{q-1}} = s^{q \frac{p-1}{q-1}} \left [ \frac{p-q}{q} a(s) + \frac{1}{s} \right ]^p \Bigg / \left ( \frac{p}{q} a(s) + 1 \right )^{p-q} ,
\]

\pagebreak

\noindent and we have $\Psi(s) \leq 1 $ if and only if
\begin{equation} \label{eq:2p14}
    s^{q \frac{p-1}{q-1}} \left [ \frac{p-q}{q} a(s) + \frac{1}{s} \right ]^p - \left [ \frac{p}{q} a(s) + 1 \right ]^{p-q} =: \Psi_1(s) \leq 0
\end{equation}
But after some tedious calculations, we notice that the derivative of $\Psi_1$ at the point 1 is negative, while $\Psi_1(1) = 0$. Thus \eqref{eq:2p14} cannot be true for $s$ close to $1^-.$ 

\vspace{5mm}

\noindent By what we have mentioned before we have now that if $(s_1,s_2) \in D$ such that $t_{s_1,s_2}'(0)<0$ then $h(s_2) < \frac{1}{s_1^q} = h(s_2''),$ thus $s_2 > s_2'' = s''(s_1).$

\vspace{5mm}

\noindent We prove now the reverse direction of this implication. Assume that $1>s_2 > s_2'',$ then $h(s_2) < h(s_2'') = \frac{1}{s_1^q} \Rightarrow h(s_2) s_1^q < 1$ and then by using the equivalent inequalities of \eqref{eq:2p10}, we obtain:
\begin{equation} \label{eq:2p15}
    \phi(t(0)) > \phi(E(s_1,s_2)^{1/(p-q)}).
\end{equation}
We consider now two cases:
\begin{enumerate}[i)]
    \item If $(s_1,s_2)$ satisfies $E(s_1,s_2) > 1$ then by \eqref{eq:2p15} we immediately get $t(0) > E(s_1,s_2)^{1/(p-q)} \Leftrightarrow t(0)^{p-q} > E(s_1,s_2)$ which as we have seen before is equivalent to
    $t_{s_1,s_2}'(0) < 0$.

    \item If $(s_1,s_2)$ satisfies $E(s_1,s_2) \leq 1$ then we obviously have $E(s_1,s_2) \leq 1 < t(0)^{p-q},$ which in turn implies $t_{s_1,s_2}'(0)<0$.
\end{enumerate}
By the above remarks we conclude that for each $(s_1,s_2) \in D$, the inequality $t_{s_1,s_2}'(0)>0$ is equivalent to $s_1^{(q-1)/(p-1)} \leq s_2 < s_2''(s_1) = s_2'',$ where $s_2''$ satisfies: $h(s_2'') = \frac{1}{s_1^q}$.

\vspace{5mm}

\noindent Denote by
\[
X = \left \{ (s_1,s_2) \in D: \ s_1^{(q-1)/(p-1)} \leq s_2 < s_2'' = h^{-1} \left ( \frac{1}{s_1^q} \right ) \right \}
\]
Then $X$ is a non-empty subset of $D$, for which it is satisfied
$$t(0) = t_{s_1,s_2}(0) < \omega_p(s_1), \forall(s_1,s_2) \in X$$
Indeed, if for some $(s_1,s_2) \in X$ we had $t(0) \geq \omega_p(s_1),$ then as remarked previously, we would have the existence of a $\beta \in (0, 1/(p-1))$ such that
$t_{s_1,s_2}(\beta) \leq t(0)$ for which also $t_{s_1,s_2}'(\beta)=0$. Then, this $t=t_{s_1,s_2}(\beta)$ would satisfy
\[
t = \min \left \{ t(\gamma) : \gamma \in \left [0 , \frac{1}{p-1} \right ] \right \} ,
\]

\pagebreak

\noindent which is a contradiction since $(s_1,s_2) \in X$ implies that: $t_{s_1,s_2}'(0)>0$ and as a consequence
\[
t_{s1,s_2}'(\beta') > 0 , \forall \beta' \in (0, 1/(p-1)).
\]
Thus by Theorem \ref{thm:2p1} the set $X$ is contained in the set
\[
Y = \left \{ (s_1,s_2) \in D : s_1^{(q-1)/(p-1)} \leq s_2 < s_2'(s_1) = s_2' \right \}
\]
We write all the above in the following statement.
\begin{theorem}
    There exists a function $h(s_2)$ defined for $s_2 \in (0,1)$ which is strictly decreasing and for which it is satisfied
    \[
    t_{s_1,s_2}'(0) < 0 \Leftrightarrow s_2 > h^{-1} \left ( \frac{1}{s_1^q} \right ) , \forall (s_1,s_2) \in D .
    \]
    Moreover, $h(s_2)$ is given in \eqref{eq:2p10}.
\end{theorem}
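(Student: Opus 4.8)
The plan is to organize the facts gathered in the discussion above. Set, as in \eqref{eq:2p10},
\[
h(s_2)=\frac{\left(\tfrac{p-q}{q}a(s_2)+\tfrac{1}{s_2}\right)^p}{\left(\tfrac{p}{q}a(s_2)+1\right)^{p-q}},\qquad a(s_2)=\frac{\omega_q(s_2)^q}{s_2}-1,\qquad s_2\in(0,1).
\]
Since $\omega_q(s_2)>1$ on $(0,1)$ we have $\omega_q(s_2)^q-s_2=q\,\omega_q(s_2)^{q-1}(\omega_q(s_2)-1)>0$, so $a(s_2)>0$ and $h$ is well defined and positive.

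First I would verify that $h$ is a strictly decreasing homeomorphism of $(0,1)$ onto $(1,+\infty)$. Differentiating the quotient, cancelling the common positive powers of $\tfrac{p-q}{q}a(s)+\tfrac1s$ and $\tfrac pq a(s)+1$, and substituting the formula for $a'(s)$ from Lemma \ref{lem:2p2}, the sign of $h'(s)$ reduces to that of
\[
\frac{p-q}{q}\,a'(s)\left(\frac{\omega_q(s)^q}{s}-\frac1s\right)-\frac pq\,\frac{a(s)}{s^2}-\frac1{s^2}.
\]
By Lemma \ref{lem:2p2}, $a'(s)<0$ (both bracketed terms there are negative because $\omega_q(s)>1$), while $\frac{\omega_q(s)^q}{s}-\frac1s=\frac1s(\omega_q(s)^q-1)>0$ and $a(s)>0$; hence each summand is negative and $h'(s)<0$ on $(0,1)$. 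Together with $h(1)=1$ and $\lim_{s\to0^+}h(s)=+\infty$ — the latter via the substitution $t=\omega_q(s)$, $s=H_q(t)$, under which $s\to0^+$ corresponds to $t\to(\tfrac{q}{q-1})^-$ — this gives the claim. In particular $h^{-1}\big(\tfrac{1}{s_1^q}\big)$ is a well-defined point of $(0,1)$ for every $s_1\in(0,1)$, since $\tfrac{1}{s_1^q}>1$.

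Next I would establish, for $(s_1,s_2)\in D$, the equivalence $t'_{s_1,s_2}(0)<0\Leftrightarrow h(s_2)<\tfrac{1}{s_1^q}$. Relation (4.5) of \cite{Nikol}, written out in \eqref{eq:2p9}, gives $t'_{s_1,s_2}(0)\le0\Leftrightarrow t^{p-q}(0)\ge E(s_1,s_2)$ and dually $t'_{s_1,s_2}(0)>0\Leftrightarrow t^{p-q}(0)<E(s_1,s_2)$, where $E(s_1,s_2)=\tfrac{p-q}{q}s_1a(s_2)+\tfrac{s_1}{s_2}>0$; since this is a strict sign equivalence, $t'_{s_1,s_2}(0)<0\Leftrightarrow t^{p-q}(0)>E(s_1,s_2)$. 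On the other hand, using $\phi(t(0))=s_1-\tfrac{p}{p-q}\tfrac{s_1}{s_2}$ with $\phi(y)=y^p-\tfrac{p}{p-q}y^{p-q}$, every step of \eqref{eq:2p10} is reversible (the powers and $(p-q)$-th roots taken are of positive quantities), and one obtains the algebraic equivalence $h(s_2)<\tfrac1{s_1^q}\Leftrightarrow\phi(t(0))>\phi\big(E(s_1,s_2)^{1/(p-q)}\big)$, i.e.\ \eqref{eq:2p15}. It then remains to match $\phi(t(0))>\phi(E^{1/(p-q)})$ with $t^{p-q}(0)>E$: when $E>1$ this is the strict monotonicity of $\phi$ on $[1,\infty)$ applied to $t(0),E^{1/(p-q)}\ge1$; when $E\le1$ both sides are automatically true, since $t(0)>1$ forces $t^{p-q}(0)>1\ge E$, while $E^{p/(p-q)}\le E<s_1\big(1+\tfrac pq a(s_2)\big)$ (using $E<s_1\tfrac{\omega_q(s_2)^q}{s_2}$, which follows from $\omega_q(s_2)>1$) forces $h(s_2)<\tfrac1{s_1^q}$.

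Finally, combining the two steps: for $(s_1,s_2)\in D$,
\[
t'_{s_1,s_2}(0)<0\ \Longleftrightarrow\ h(s_2)<\tfrac{1}{s_1^q}=h\big(h^{-1}(\tfrac{1}{s_1^q})\big)\ \Longleftrightarrow\ s_2>h^{-1}\big(\tfrac{1}{s_1^q}\big),
\]
the last step by the strict decrease of $h$; this is the asserted characterization, with $h$ the function of \eqref{eq:2p10}. I expect the main obstacle to be the sign computation for $h'(s)$ in the first step — carrying out the differentiation of the fraction and reducing it, via Lemma \ref{lem:2p2}, to the displayed three-term expression whose negativity is then immediate; the only other point requiring care is the bookkeeping of strict versus non-strict inequalities and the separate treatment of the regime $E(s_1,s_2)\le1$ in the second step.
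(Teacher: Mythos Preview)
Your proposal is correct and follows essentially the same route as the paper: the same formula for $h$, the same derivative computation reducing (via Lemma \ref{lem:2p2}) to the three-term expression whose negativity is immediate, the same boundary behaviour $h(1)=1$, $h(0^+)=+\infty$, the same use of \eqref{eq:2p9} to rewrite the sign of $t'_{s_1,s_2}(0)$ as a comparison of $t^{p-q}(0)$ with $E(s_1,s_2)$, the same reversible algebraic chain \eqref{eq:2p10}, and the same case split $E>1$ versus $E\le1$. The only difference is organisational --- you package both implications into a single equivalence argument, whereas the paper proves one direction and then its converse separately --- and your treatment of the case $E\le1$ is, if anything, slightly more explicit than the paper's (the parenthetical intermediate inequality ``$E<s_1\,\omega_q(s_2)^q/s_2$'' is not quite the right step, but the target inequality $E<s_1(1+\tfrac{p}{q}a(s_2))$ does reduce to $\omega_q(s_2)>1$ as you claim).
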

\noindent Thus we also have, according to the results of \cite{Nikol}, that if for every $(s_1,s_2) \in D$ we denote by
\begin{equation} \label{eq:2p16}
t(s_1,s_2) = \min \left \{ t_{s_1,s_2}(\beta) = t(s_1,s_2,\beta) : \beta \in \left [0, \frac{1}{p-1} \right ] \right \}
\end{equation}
then, if $(s_1,s_2) \in X \subseteq D$ we have that $t(s_1,s_2) = t_{s_1,s_2}(0)$ which is defined by the equation \[
t_{s_1,s_2}^p(0) - \frac{p}{p-q} t_{s_1,s_2}^{p-q}(0) = h(s_1,s_2) := s_1 - \frac{p}{p-q} \frac{s_1}{s_2} ,
\]
while if $(s_1,s_2) \in D \setminus X$ we have that $t_{s_1,s_2}'(0) \leq 0$ so that $t=t(s_1,s_2)$ which is defined by \eqref{eq:2p16} above satisfies
\[
F_{s_1,s_2}(t) = 0,
\]
where also $t<t(0) = t_{s_1,s_2}(0)$ if $t_{s_1,s_2}'(0)<0$ and $t=t(0)$ if $t_{s_1,s_2}'(0)=0$ and where the function $F_{s_1,s_2}$ is explicitly given in \cite{Nikol}.

\vspace{5mm}

\noindent Note also that in the case where $t_{s_1,s_2}'(0) > 0,$ that is when $(s_1,s_2) \in X,$ the following hold:

\vspace{5mm}

\noindent Obviously by what is mentioned right above we have that $t(0) = t_{s_1,s_2}(0) = t(s_1,s_2)$ and $t(0)$ satisfies
\[
\phi(t(0)) = h(s_1,s_2) := s_1 - \frac{p}{p-q} \frac{s_1}{s_2},
\]
where $\phi(y) = y^p - \frac{p}{p-q} y^{p-q}, \ y\geq 1.$

\pagebreak

\noindent Then $t(0) = \phi^{-1} \left ( h(s_1,s_2) \right )$ and thus:
\begin{equation} \label{eq:2p17}
    \begin{cases}
        (i) \ \frac{\partial t(0)}{\partial s_1} = \left ( \phi^{-1} \right )' \left ( h(s_1,s_2) \right ) \cdot \frac{\partial h}{ \partial s_1} , \\ \\
        (ii) \ \frac{\partial t(0)}{\partial s_2} = \left ( \phi^{-1} \right )' \left ( h(s_1,s_2) \right ) \cdot \frac{\partial h}{ \partial s_2}
    \end{cases}
\end{equation}
Since \[
\left ( \phi^{-1} \right )' \left ( h(s_1,s_2) \right ) = \frac{1}{\phi' \left ( \phi^{-1} \left ( h(s_1,s_2) \right ) \right ) } = \frac{1}{\phi'(t(0))} 
\]
and $t(0) > 1$ (since $s_2 < 1$) we should have by \eqref{eq:2p17} that:
\begin{equation} \label{eq:2p18}
    \begin{cases}
        \frac{\partial t(0) }{\partial s_1} \cong \frac{\partial h}{\partial s_1} = 1 - \frac{p}{p-q} \frac{1}{s_2} < 0 \\ \\
        \frac{\partial t(0)}{\partial s_2} \cong \frac{\partial h}{\partial s_2} = \frac{p}{p-q} \frac{s_1}{s_2^2} > 0
    \end{cases}
    \forall (s_1,s_2) \in X.
\end{equation}

\newpage

\vspace{50pt}
\noindent Nikolidakis Eleftherios\\
Assistant Professor\\
Department of Mathematics \\
Panepistimioupolis, University of Ioannina, 45110\\
Greece\\
E-mail address: enikolid@uoi.gr

\end{document}